\documentclass[11pt,letterpaper]{article}
\usepackage[T1]{fontenc}
\usepackage[utf8]{inputenc}
\usepackage{lmodern}
\usepackage{microtype}
\usepackage{amsmath,amssymb,amsthm,mathtools}
\usepackage[margin=24mm,top=20mm,bottom=22mm,footskip=11mm]{geometry}
\usepackage{indentfirst}
\usepackage[hidelinks,pdfencoding=auto,psdextra]{hyperref}
\usepackage{bookmark}
\usepackage{setspace}
\allowdisplaybreaks[1]
\newtheorem{theorem}{Theorem}[section]
\newtheorem{lemma}[theorem]{Lemma}

\newtheorem{conjecture}[theorem]{Conjecture}
\newtheorem{claim}[theorem]{Claim}
\theoremstyle{definition}

\theoremstyle{remark}

\DeclareMathOperator{\bind}{bind}
\newcommand{\ov}[1]{\overline{#1}}

\makeatletter
\renewcommand{\@fnsymbol}[1]{\@arabic{#1}}
\makeatother

\hypersetup
{
	colorlinks=true,
	linkcolor=blue,
	filecolor=blue,
	urlcolor=blue,
	citecolor=blue,
}
\title{\vspace{-1.2em}\Large\bfseries
	A sharp bound on the binding number of \(K_k\)-free graphs and the Brandt conjecture}
\author{
	Jifu Lin\thanks{School of Mathematical Sciences, East China Normal University, Shanghai, China. E-mail: jifulin01@163.com}\quad\quad
	Heng Yang\thanks{Corresponding author. School of Mathematical Sciences, East China Normal University, Shanghai, China. E-mail: younghmath@163.com}
}
\date{}
\begin{document}
	\maketitle
	\vspace{-2.5em}

\begin{abstract}
The binding number of a graph is the smallest ratio $|N(S)|/|S|$ among all nonempty vertex sets $S$ whose neighborhood $N(S)$ is not the whole vertex set. We prove that every finite nonempty $K_k$-free graph has binding number at most $k-2$ for every $k\ge6$, confirming Brandt's conjecture; the bound is sharp for complete $(k-1)$-partite graphs with equal nonempty parts. We first prove the case $k=6$ and then extend it to all $k\ge6$ by induction.
\end{abstract}

\textbf{Keywords:} Binding number; $K_k$-free graph; Brandt's conjecture; Independence number

\textbf{MSC 2020:} 05C35; 05C76.

\section{Introduction}

All graphs considered are finite and simple. A graph $G$ is nonempty if $V(G)\ne\emptyset$.
For $S\subseteq V(G)$, let $N_G(S)=\bigcup_{v\in S}N_G(v)$.
This union of open neighborhoods may intersect $S$.
The binding number of $G$ is
\begin{equation}\label{eq:binding-definition}
	\bind(G)=
	\min\left\{
	\frac{|N_G(S)|}{|S|}
	:\ \varnothing\ne S\subseteq V(G),\ N_G(S)\ne V(G)
	\right\}.
\end{equation}
The binding number is well-defined because every singleton is admissible in a simple nonempty graph.

Building on Anderson's perfect-matchings result~\cite{Anderson}, Woodall~\cite{Woodall} introduced the binding number in 1973 as a neighborhood condition for the existence of matchings and cycles. 

Woodall \cite{Woodall} proved a Hamiltonicity theorem and asked whether the same binding condition forces a triangle. Shi \cite{ShiTriangle,ShiPancyclic} answered this question in 1985 and established the stronger pancyclicity conclusion in 1987.  

\begin{theorem}[{Woodall~\cite{Woodall}; Shi~\cite{ShiTriangle,ShiPancyclic}}]\label{thm:historical-cycles}
	Let $G$ be a graph with $\bind(G)\ge3/2$. Then\\
	{\rm (i)} $G$ is Hamiltonian.\\
	{\rm (ii)} $G$ contains a triangle.\\
	{\rm (iii)} $G$ is pancyclic, that is, it contains a cycle of every length from three to $|G|$.
\end{theorem}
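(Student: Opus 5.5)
The plan is to derive two structural consequences of $\bind(G)\ge 3/2$ first, and then deduce (ii) almost directly, (i) by a longest-cycle argument, and (iii) by a chord-counting argument on a Hamilton cycle. Throughout, write $n=|G|$ and $b=\bind(G)\ge 3/2$.

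\textbf{Step 1: minimum degree.} Fix a vertex $v$ and put $S=V(G)\setminus N_G(v)$. Then $v\in S$, and no vertex of $S$ is adjacent to $v$, so $v\notin N_G(S)$. Hence $S$ is admissible in \eqref{eq:binding-definition}. Since also $|N_G(S)|\le n-1$, we get
\[
b\,(n-\deg v)\le n-1,\qquad\text{so}\qquad \deg v\ge n-\frac{n-1}{b}\ge\frac{n+2}{3}.
\]

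\textbf{Step 2: independent sets.} If $S$ is a nonempty independent set, then $N_G(S)\cap S=\varnothing$. In particular $N_G(S)\ne V(G)$, so $S$ is admissible and $|S|+|N_G(S)|\le n$. This gives $(1+b)|S|\le n$, and hence $\alpha(G)\le 2n/5$.

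\textbf{Part (ii).} Suppose $G$ is triangle-free and let $v$ have maximum degree $\Delta$. Then $N_G(v)$ is independent. By Step 2, $|N_G(N_G(v))|\ge \tfrac32\Delta$ and this set is disjoint from $N_G(v)$, so $n\ge \tfrac52\Delta$. On the other hand, Step 1 gives $\Delta\ge (n+2)/3$, and $\tfrac52\cdot\tfrac{n+2}{3}>n$. This contradiction yields a triangle.

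\textbf{Part (i).} I would follow Woodall's approach. Step 1 already gives that $G$ is connected and in fact highly connected, since a small separator would produce a set $S$ (one side) with $|N_G(S)|<\tfrac32|S|$. Take a longest cycle $C$, and suppose some vertex $x\notin C$, lying in a component $H$ of $G-C$. Let $A$ be the set of successors on $C$ of the attachment vertices of $H$. The usual rotation argument shows that $A\cup\{x\}$ is independent and that no two vertices of $A$ have "crossing" neighbours along $C$. The latter makes $N_G(A)$ essentially injectable into a set of size at most about $|C|-|A|$. Combining this with the admissibility of $S=A\cup V(H)$ should give $|N_G(S)|<\tfrac32|S|$, a contradiction. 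The delicate point is bounding $|N_G(S)|$ sharply enough to reach the constant $3/2$.

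\textbf{Part (iii).} Now take a Hamilton cycle $C=v_1v_2\cdots v_nv_1$ and aim to show that $G$ contains a cycle of every length $\ell$ with $3\le\ell\le n$. Bondy's "Hamiltonian with $\ge n^2/4$ edges" criterion is too weak here, since Step 1 gives only about $n^2/6$ edges. Instead I would argue by downward induction on $\ell$, in the style of Bondy's chord lemmas. Suppose $C$ has no cycle of length $\ell$ formed using at most two chords. Then for each vertex $v_i$, the sets $N(v_i)$ and shifts of $N(v_{i+1})$ by $\ell$-dependent offsets along $C$ must be disjoint. Summing over $i$ gives $\deg v_i+\deg v_{i+1}\le n+O(1)$ together with structural restrictions. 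From these I would extract an independent set of size larger than $2n/5$, contradicting Step 2, or a set violating $b\ge 3/2$ directly. Part (ii) handles $\ell=3$ and part (i) handles $\ell=n$.

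I expect this pancyclicity step to be the main obstacle. It is where Shi's 1987 work lies, and the chord-counting must be organized carefully for every intermediate length $\ell$, with both consequences of $b\ge 3/2$ (minimum degree and the independence bound) used simultaneously.
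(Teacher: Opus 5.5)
The paper gives no proof of this theorem; it only cites Woodall and Shi, so your proposal has to stand on its own, and it does not. Part (ii) has a concrete error. The closing inequality $\tfrac52\cdot\tfrac{n+2}{3}>n$ is equivalent to $n<10$, so your argument only covers graphs on at most nine vertices.

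This cannot be fixed using only the two consequences from Steps 1 and 2. Take the balanced blow-up of $C_5$ with parts $V_1,\dots,V_5$ of size $m\ge2$, so $n=5m\ge10$. It is triangle-free, has $\delta=2n/5\ge(n+2)/3$ and $\alpha=2n/5$, and your chain $n\ge|N_G(v)|+|N_G(N_G(v))|\ge\tfrac52\Delta$ holds with equality there. What excludes this graph is the binding condition applied to $S=V(G)\setminus N_G(v)=V_1\cup V_3\cup V_4$ for $v\in V_1$. Here $N_G(S)=V_2\cup V_3\cup V_4\cup V_5$ has only $4n/5$ vertices, so the ratio is $4/3$. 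In Step 1 you discarded exactly this information by using the crude bound $|N_G(S)|\le n-1$. A proof of (ii) must estimate $|N_G(S)|$ for non-independent admissible sets far more precisely, and that is the substance of Shi's 1985 paper.

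Parts (i) and (iii) are plans, not proofs. In (i), the decisive inequality $|N_G(A\cup V(H))|<\tfrac32|A\cup V(H)|$ is only asserted (``should give''). The immediate bound $|N_G(S)|\le n-|A|$, which follows from $N_G(S)\cap A=\varnothing$, yields merely $|A|\le(2n-3|V(H)|)/5$, which is not contradictory. You do not show how the non-crossing property sharpens this to a contradiction, and that counting is Woodall's theorem itself.

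In (iii), the chord-counting is not carried out for any intermediate length $\ell$. It is supposed to produce an independent set of more than $2n/5$ vertices or another set violating $\bind(G)\ge3/2$, and without it nothing is proved. It also relies on (ii) for $\ell=3$, so it inherits the gap above.
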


These results show that a uniform neighborhood condition can force both a spanning structure and small prescribed subgraphs.

A complete $(k-1)$-partite graph with equal nonempty parts contains no $K_k$ and has binding number exactly $k-2$. Brandt conjectured that this construction gives the exact upper bound for all sufficiently large clique orders.

\begin{conjecture}[{Brandt~\cite{Brandt}; see Goddard~\cite{GoddardSurvey}}]\label{conj:brandt}
For all sufficiently large integers $k$, every nonempty $K_k$-free graph $G$
satisfies $
  \bind(G)\le k-2.$
\end{conjecture}

We next recall the known bounds and related results on Conjecture~\ref{conj:brandt}.

For triangles, a sufficient condition involving only independent sets is known. Robertshaw and Woodall~\cite{RW} proved that a graph $G$ of order $n$ contains a triangle if $|N_G(S)|>(n+|S|)/3$ for every nonempty independent set $S\subseteq V(G)$. Caro, in a postscript to the paper of Robertshaw and Woodall~\cite{RW}, proposed a conjecture that extends this condition: for an integer $k\ge 2$, every graph $G$ of order $n$ satisfying $|N_G(S)|>(k-2)(n+|S|)/k$ for every nonempty independent set $S$ contains a copy of $K_k$. Lyle and Goddard~\cite{LG} showed that the conjecture is false for every $k\ge 4$, by constructing, for each such $k$, a $K_k$-free graph that satisfies the stated hypothesis.

For larger cliques, Kane and Mohanty~\cite{KM} proved that $\bind(G)\ge k-4/3$ for an integer $k\ge 3$ implies that $G$ contains a copy of $K_k$. Lyle and Goddard~\cite{LG} subsequently improved this estimate: they constructed $K_4$-free graphs whose binding numbers are arbitrarily close to $(3+\sqrt{33})/4$, proved that every $K_4$-free graph satisfies $\bind(G)\le (5+\sqrt{91})/6$, and proved that for $k\ge 5$ every $K_k$-free graph satisfies $\bind(G)<k-3/2-{1}/({4k-6})$. In particular, the bound $k-2$ proposed in Conjecture~\ref{conj:brandt} does not hold for $k=4$, since these constructions yield $K_4$-free graphs with binding number greater than $2$; a restriction on the clique order in Conjecture~\ref{conj:brandt} is therefore necessary.

In this paper, we prove Conjecture~\ref{conj:brandt} by establishing the sharp upper bound for every $k\ge6$.  
\begin{theorem}\label{thm:main}
For every integer $k\ge6$ and every nonempty $K_k$-free graph $G$, we have $\bind(G)\le k-2$. For each $k\ge6$, the bound is sharp.
\end{theorem}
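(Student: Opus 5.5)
We plan to argue by contradiction, following the abstract: first settle $k=6$ directly, then pass from $k-1$ to $k$ by induction. \textbf{Sharpness} is immediate. In $K_{m,\dots,m}$ with $k-1$ parts, a set $S$ inside one part has $N(S)$ equal to the other $k-2$ parts, so $N(S)\ne V(G)$ and $|N(S)|/|S|\ge (k-2)m/m$, with equality when $S$ is a whole part. If $S$ meets two parts, then $N(S)=V(G)$ and $S$ is not admissible. Hence $\bind=k-2$.

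\textbf{Induction step.} Assume the bound for $k-1\ge6$, and let $G$ be $K_k$-free with $\bind(G)>k-2$. The natural move is to choose a vertex $v$ and set $H=G[N(v)]$, which is $K_{k-1}$-free. By induction there is a nonempty $T\subseteq N(v)$ with $N_H(T)\ne N(v)$ and $|N_H(T)|\le (k-3)|T|$. We want to lift $T$ to an admissible set of $G$ whose ratio is at most $k-2$.

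\textbf{Preliminary bounds.} Admissibility of singletons gives $\delta(G)>k-2$. Taking $S=V\setminus N(u)$ for a suitable $u$ (so $u\notin N(S)$ is not automatic) gives an upper bound on $|S|$ through the independence-type sets $S\subseteq V\setminus N[x]$. In particular, any admissible $S$ with $N(S)\ne V$ forces $|V|>|N(S)|>(k-2)|S|$. So sets avoiding the neighbourhood of a fixed vertex $x$, namely $S\subseteq V\setminus N(x)$, satisfy $x\notin N(S)$ and hence $|N(S)|>(k-2)|S|$. Applying this to $S=V\setminus N(x)$ yields $n-1\ge |N(S)|>(k-2)(n-d(x))$, so $d(x)>\frac{k-3}{k-2}n+\frac1{k-2}$ for every $x$. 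This is already a Turán-type density: the minimum degree exceeds $(1-\frac{1}{k-2})n$. Andrásfai–Erdős–Sós-type stability then shows that $G$ is close to $(k-2)$-partite structure, and the admissible sets inside parts should yield the contradiction. Concretely, the degree bound alone forces $K_{k-1}$ by Turán, and we need to exploit $|N(S)|>(k-2)|S|$ for other $S$ as well.

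\textbf{Lifting.} Put $S=T\cup(V\setminus N(w))$ for a vertex $w\in N(v)\setminus N_H(T)$ chosen so that $w\notin N(S)$, which requires controlling the neighbours of $w$ outside $N(v)$. Then $|N(S)|\le |N_H(T)|+|V\setminus N(v)|$ plus whatever $S$ reaches outside $N(v)$. Combining this with the inductive inequality and the degree bound, the target is $|N(S)|\le (k-2)|S|$. The extra $+1$ in the ratio, from $k-3$ to $k-2$, has to absorb the vertices outside $N(v)$. This absorption is where the bookkeeping lies. An alternative is to add $v$ itself to $T$, since $v$ sees all of $N(v)$. That fails, because it makes $N(S)\supseteq N(v)$, so we should instead add a non-neighbour layer.

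\textbf{Base case $k=6$: the main obstacle.} Lyle–Goddard only give $\bind<k-3/2-\frac{1}{4k-6}$. For $k=6$ we must show that $K_6$-free implies $\bind\le4$ with no slack from induction, since $k=5$ fails. Our plan is to take a $K_5$ (which exists by Turán, given $\delta>\frac34 n$) with vertex set $Q$, and classify the remaining vertices by how many vertices of $Q$ they are adjacent to. Each vertex misses at least one vertex of $Q$, and high degree forces most vertices to miss exactly one. For each $q\in Q$, let $A_q$ be the set of vertices missing only $q$; then $A_q\cup\{q\}$ is nearly independent, and $N(A_q\cup\{q\})$ avoids $q$. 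Applying $|N(S)|>4|S|$ to these five sets, and to unions of the exceptional vertices, and summing the five inequalities should give $\sum|N(S_q)|\le 4n$-ish against $>4\sum|S_q|\approx4n$, a contradiction. Making the exceptional vertices (those missing two or more vertices of $Q$) fit into this count is the step we expect to be hardest. We would first try weighting each exceptional vertex by $1/(\text{number of missed }Q\text{-vertices})$ across the sets.
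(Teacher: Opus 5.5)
Your sharpness argument is correct and matches the paper. Both halves of the upper bound, however, have genuine gaps.

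\textbf{Induction step.} As written the lifting is vacuous. Since $w\in N(v)\setminus N_H(T)$, no vertex of $T$ is adjacent to $w$, so $T\subseteq V\setminus N(w)$ and $S=T\cup(V\setminus N(w))=V\setminus N(w)$. The inductive set $T$ drops out, and you only recover the degree bound.

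The route itself also loses too much. A set found inside $N(v)$ may pick up all $n-d(v)$ outside vertices as new neighbours. Even with the best bookkeeping, the case $k-1$ guarantees a contradiction with $\bind(G)>k-2$ only when $d(v)\ge\frac{k-2}{k-1}n$. (The best bookkeeping uses that $T$ and $N(v)\setminus N_H(T)$ are mutually non-adjacent and takes the larger of the two as the new set; this is the scaling inequality \eqref{eq:scaling}.) You only know $d(v)>\frac{k-3}{k-2}n+\frac{1}{k-2}$. Nothing in the proposal treats graphs whose degrees all lie between these two thresholds, and the complete $(k-1)$-partite extremal graphs sit exactly at $\frac{k-2}{k-1}n$.

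The appeal to Andr\'asfai--Erd\H{o}s--S\'os does not fill this gap. For $K_k$-free graphs its threshold is $\frac{3k-7}{3k-4}n>\frac{k-3}{k-2}n$, and its conclusion would be $(k-1)$-partite, not $(k-2)$-partite.

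The paper's induction is of a different kind. It proves $M_{k-2}(\ov G)\ge n$ (Lemma~\ref{lem:binding-M}) and splits into two cases:
\begin{itemize}
\item If every maximum independent set of $G$ meets every maximum clique, it deletes one maximum independent set, of size $<n/(k-1)$. This lowers the clique number, and \eqref{eq:scaling} supplies the factor $\frac{k-1}{k-2}$.
\item If some maximum independent set of $G$ is disjoint from some maximum clique, private cliques and Lemmas~\ref{lem:two-exchange} and~\ref{lem:three-exchange}, together with Lyle's theorem and Theorem~\ref{thm:alpha4}, give the contradiction.
\end{itemize}

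\textbf{Base case.} Your set-up is sound as far as it goes; $A_q\cup\{q\}$ is in fact independent, since an edge inside it would complete a $K_6$ with $Q\setminus\{q\}$. But it only settles the situation with no exceptional vertices. Write $X$ for the vertices missing at least two vertices of $Q$, and $c(x)$ for the number of $q$ with $x\in N(S_q)$. Every vertex outside $X$ lies in exactly four of the sets $N(S_q)$, and $\sum_q|S_q|=n-|X|$. So the five inequalities $|N(S_q)|>4|S_q|$ sum to an inequality equivalent to $\sum_{x\in X}c(x)>0$, which is no contradiction once $X\neq\varnothing$. The degree bound only gives $|X|<n/4$, and the weighting idea is never carried out.

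More tellingly, the identical template runs verbatim for $k=4$: Mantel's triangle, independent sets $A_q\cup\{q\}$, summation. There the bound $k-2$ is false by the Lyle--Goddard constructions. So every use of $k\ge6$ would have to occur in the treatment of $X$, which is exactly the part you leave open. In the paper this is the bulk of the work: Theorem~\ref{thm:alpha5}, resting on Theorem~\ref{thm:alpha4}, Robertshaw--Woodall, Jin's homomorphism theorem and Lyle's results on $K_4$-free graphs.

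Incidentally, you assert that $k=5$ fails; the paper only cites a failure at $k=4$.
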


We use the following notation throughout.  The independence number $\alpha(G)$ and clique number $\omega(G)$ are the largest cardinalities of an independent set and a clique in $G$, respectively. We write $d_G(v)$ for the degree of a vertex $v$ in $G$. The minimum and maximum degrees are $\delta(G)$ and $\Delta(G)$. We define $N_G[v]=N_G(v)\cup\{v\}$ and $N_G[S]=\bigcup_{v\in S}N_G[v]$ for closed neighborhoods, $\ov G$ for the complement, and $|G|=|V(G)|$ for the order. Let $G$ and $H$ be graphs with disjoint vertex sets. Their disjoint union \(G\cup H\) has vertex set \(V(G)\cup V(H)\) and edge set \(E(G)\cup E(H)\), with no edges between \(V(G)\) and \(V(H)\). The join \(G\vee H\) is obtained from \(G\cup H\) by adding every edge between \(V(G)\) and \(V(H)\).

The rest of the paper is organized as follows. Section~\ref{sec:prelim} gives an equivalent characterization of $\bind(G)>c$, together with some lemmas from the literature and several lemmas of our own. Section~\ref{sec:private} introduces private cliques and the auxiliary lemmas. Section~\ref{mainresult} presents three auxiliary theorems, and we prove the main theorem at the end.

\section{Preliminaries}\label{sec:prelim}

A pair of nonempty vertex sets $(X,Y)$ in a graph $H$ is \emph{cross-complete} if $xy\in E(H)$ whenever $x\in X$, $y\in Y$, and $x\ne y$.  The sets may overlap; their intersection is then a clique. For $c\ge1$, define
$M_c(H)=\max\{c|X|+|Y|:(X,Y)\text{ is cross-complete in }H\}$ for nonempty $H$, and let $M_c(\varnothing)=0$.

\begin{lemma}\label{lem:binding-M}
For every graph $G$ of order $n$ and every $c\ge1$, $\bind(G)>c$ if and only if $M_c(\ov G)<n$.
\end{lemma}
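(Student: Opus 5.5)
The plan is to prove the contrapositive form: $\bind(G)\le c$ if and only if $\ov G$ has a cross-complete pair $(X,Y)$ with $c|X|+|Y|\ge n$. Both directions should come from one dictionary. A set $S$ admissible in~\eqref{eq:binding-definition} corresponds to the pair $(X,Y)=(S,\,V(G)\setminus N_G(S))$, and conversely a cross-complete pair $(X,Y)$ in $\ov G$ gives the set $S=X$. Throughout I take $G$ nonempty, since $\bind$ is only defined then.

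\emph{From a bad set to a pair.} Suppose $\bind(G)\le c$. Pick $S$ with $\varnothing\ne S$, $N_G(S)\ne V(G)$ and $|N_G(S)|\le c|S|$; the minimum in~\eqref{eq:binding-definition} is attained, so such an $S$ exists. Put $X=S$ and $Y=V(G)\setminus N_G(S)$, which is nonempty because $N_G(S)\ne V(G)$.

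Take $x\in X$ and $y\in Y$ with $x\ne y$. Then $y\notin N_G(x)$, so $xy\notin E(G)$ and hence $xy\in E(\ov G)$. Thus $(X,Y)$ is cross-complete in $\ov G$. Moreover
\[
c|X|+|Y|\ \ge\ |N_G(S)|+\bigl(n-|N_G(S)|\bigr)=n,
\]
so $M_c(\ov G)\ge n$.

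\emph{From a pair to a bad set.} Suppose $(X,Y)$ is cross-complete in $\ov G$ with $c|X|+|Y|\ge n$, and set $S=X\neq\varnothing$. The key check is that $Y\cap N_G(X)=\varnothing$. Let $y\in Y$ and $x\in X$. If $x\ne y$, then $xy\in E(\ov G)$, so $xy\notin E(G)$. If $x=y$, then $y\notin N_G(x)$ because $G$ is simple (open neighbourhoods). So no vertex of $Y$ is a $G$-neighbour of $X$.

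Consequently $N_G(X)\ne V(G)$, since $Y\ne\varnothing$, and
\[
|N_G(X)|\le n-|Y|\le c|X|.
\]
So $S=X$ is admissible with ratio at most $c$, giving $\bind(G)\le c$.

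Nothing here is a real obstacle. The only point needing care is the overlap $X\cap Y$ allowed in the definition of cross-complete pairs. This is exactly why the paper uses open neighbourhoods, which may meet $S$: a vertex of $X\cap Y$ is not its own neighbour, so it counts in $Y$ and not in $N_G(X)$. The convention $M_c(\varnothing)=0$ is irrelevant for nonempty $G$, since $\ov G$ is then nonempty.
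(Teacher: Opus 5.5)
Your proof is correct and uses the same correspondence as the paper, $S\leftrightarrow(S,V(G)\setminus N_G(S))$, merely phrased through the contrapositive of each direction. Your explicit check that a vertex of $X\cap Y$ cannot lie in $N_G(X)$, because neighbourhoods are open in a simple graph, is exactly the point the paper compresses into the inclusion $N_G(X)\subseteq V(G)\setminus Y$.
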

\begin{proof}
Suppose $\bind(G)>c$.  For any cross-complete pair $(X,Y)$ in $\ov G$, $N_G(X)\subseteq V(G)\setminus Y$.  Since $Y\ne\varnothing$, we have $c|X|<|N_G(X)|\le n-|Y|$, and hence $M_c(\ov G)<n$. Conversely, for any nonempty $X$ with $N_G(X)\ne V(G)$, the pair $(X,V(G)\setminus N_G(X))$ is cross-complete in $\ov G$. Thus $M_c(\ov G)<n$ implies $c|X|<|N_G(X)|$, which proves $\bind(G)>c$.
\end{proof}

Passing to a subgraph can only decrease $M_c$. For $d\ge c\ge1$, we also have
\begin{equation}\label{eq:scaling}
  M_d(H)\ge\frac{d+1}{c+1}M_c(H).
\end{equation}
If $H$ is empty, \eqref{eq:scaling} is immediate. Otherwise, choose a maximizing pair for \(M_c(H)\) and label the sets so that \(|X|\ge|Y|\); this is possible because \(c\ge1\). Then $|X|\ge M_c(H)/(c+1)$, which gives \eqref{eq:scaling} by $M_d(H)\geq d|X|+|Y|=M_c(H)+(d-c)|X|$. Let $v$ be a vertex with $|N[v]|=D=\Delta(H)+1$, and let $Q$ be a maximum clique of size $a=\omega(H)$. The pairs $(N[v],\{v\})$ and $(Q,Q)$ are cross-complete and give
\begin{equation}\label{eq:elementary-M}
	M_c(H)\ge cD+1,\qquad M_c(H)\ge(c+1)a.
\end{equation}

The following neighborhood theorem is due to Robertshaw and Woodall~\cite[Theorem 4]{RW}: every nonempty triangle-free graph
$H$ has a nonempty independent set $Y$ such that $|N_H(Y)|\le(|H|+|Y|)/3$.

\begin{lemma}\label{lem:alpha2}
If $\alpha(H)\le2$, then $M_4(H)\ge5|H|/2$.
\end{lemma}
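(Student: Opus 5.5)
We will apply the Robertshaw--Woodall neighborhood theorem to the complement $\ov H$. Write $n=|H|$. If $n=0$, then $M_4(H)=0$ and the claim is trivial, so assume $n\ge1$. The hypothesis $\alpha(H)\le2$ says exactly that $\ov H$ is triangle-free, and $\ov H$ is nonempty.

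\textbf{Step 1: the cross-complete pair.} The theorem of Robertshaw and Woodall~\cite[Theorem 4]{RW} gives a nonempty independent set $Y$ of $\ov H$ with
\[
|N_{\ov H}(Y)|\le\frac{n+|Y|}{3}.
\]
Put $X=V(H)\setminus N_{\ov H}(Y)$. Then $Y\subseteq X$, because $Y$ is independent in $\ov H$, so in particular $X$ is nonempty.

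For $x\in X$ and $y\in Y$ with $x\ne y$, we have $xy\notin E(\ov H)$ because $x\notin N_{\ov H}(Y)$. Hence $xy\in E(H)$. So $(X,Y)$ is cross-complete in $H$, and so is $(Y,X)$, since the definition is symmetric in the two sets. Moreover
\[
|X|\ge n-\frac{n+|Y|}{3}=\frac{2n-|Y|}{3}.
\]

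\textbf{Step 2: choose the better orientation.} Using the pair $(X,Y)$ gives
\[
M_4(H)\ge 4|X|+|Y|\ge\frac{8n-|Y|}{3}.
\]
Using the pair $(Y,X)$ gives
\[
M_4(H)\ge 4|Y|+|X|\ge\frac{2n+11|Y|}{3}.
\]
If $|Y|\le n/2$, the first bound is at least $(8n-n/2)/3=5n/2$. If $|Y|>n/2$, the second bound exceeds $(2n+11n/2)/3=5n/2$. In either case $M_4(H)\ge 5n/2$.

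The only real point is to recognize that the Robertshaw--Woodall set in $\ov H$ is a clique of $H$, and that its non-neighborhood in $\ov H$ is fully joined to it in $H$. After that, balancing the two orientations at $|Y|=n/2$ is routine arithmetic, so no step is a serious obstacle.
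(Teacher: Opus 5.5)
Your proof is correct and follows the paper's argument: apply the Robertshaw--Woodall theorem to the triangle-free graph $\ov H$ and take $X=V(H)\setminus N_{\ov H}(Y)$. The only difference is the final arithmetic. The paper uses $|X|\ge|Y|$ (from $Y\subseteq X$) to get $4|X|+|Y|\ge\frac54(3|X|+|Y|)\ge 5|H|/2$ in one line, so your case split on $|Y|$ and the swap to the pair $(Y,X)$ are valid but not needed.
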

\begin{proof}
Apply the neighborhood theorem to the triangle-free graph $\ov H$, and let $X=V(H)\setminus N_{\ov H}(Y)$. Since $Y$ is an independent set of $\ov{H}$, we obtain that $Y\subseteq X$ and $(X,Y)$ is cross-complete in $H$. Moreover,  $3|X|+|Y|=3(|H|-|N_{\ov{H}}(Y)|)+|Y|\ge2|H|$.  Since $|X|\ge|Y|$, $4|X|+|Y|\ge\tfrac54(3|X|+|Y|)\ge5|H|/2$.
\end{proof}

The following triangle-free facts are contained in the triangle-free case of the Andr\'asfai--Erd\H{o}s--S\'os theorem and its proof \cite{AES}.

\begin{lemma}[Andr\'asfai--Erd\H{o}s--S\'os \cite{AES}]\label{lem:triangle-degree}
	Let $G$ be a triangle-free graph of order $m$.
	\\
	{\rm (i)} Every vertex of $G$ has at most two neighbors on a shortest
	odd cycle of $G$.\\
	{\rm (ii)} If $\delta(G)>2m/5$, then $G$ is bipartite.
\end{lemma}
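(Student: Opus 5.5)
The plan is to reprove both parts directly, following the classical Andr\'asfai--Erd\H{o}s--S\'os argument rather than quoting it. The two parts are linked: (ii) follows from (i) by a degree count over a shortest odd cycle.

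For (i), let $C=v_0v_1\cdots v_{2\ell}v_0$ be a shortest odd cycle of $G$. Since $G$ is triangle-free, the length satisfies $2\ell+1\ge5$; if the length were $3$, a vertex off $C$ adjacent to three cycle vertices would already give a triangle. Suppose a vertex $x$ has three neighbors $v_a,v_b,v_c$ on $C$.
\begin{itemize}
\item If $x\in V(C)$, then a neighbor of $x$ on $C$ that is not adjacent to $x$ along $C$ is a chord. A chord of a shortest odd cycle splits it into two shorter cycles, one of which is odd, a contradiction. So $x\notin C$.
\item The three vertices $v_a,v_b,v_c$ cut $C$ into three arcs whose lengths sum to $2\ell+1$, which is odd. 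Hence some arc has odd length, and any arc of odd length $r$ together with $x$ forms an odd closed walk of length $r+2$.
\item Since the arc lengths are positive integers summing to $2\ell+1$, the other two arcs have total length at least $2$, so $r\le 2\ell-1$. Also $r\ne1$, because two consecutive cycle vertices adjacent to $x$ would form a triangle. Thus $r\ge3$.
\item Then $r+2\le 2\ell+1$, with equality only when the other two arcs have length $1$ each. That case would again put $x$ adjacent to two consecutive vertices of $C$, giving a triangle.
\item Hence $r+2<2\ell+1$. An odd closed walk contains an odd cycle of at most its length, contradicting minimality of $C$.
\end{itemize}
So every vertex has at most two neighbors on $C$.

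For (ii), suppose $\delta(G)>2m/5$ and $G$ is not bipartite. Take a shortest odd cycle $C$ of length $2\ell+1\ge5$ and double count the edges between $V(C)$ and $V(G)$. On one side,
\[
\sum_{v\in C}d(v)>(2\ell+1)\cdot\frac{2m}{5}.
\]
On the other side, by (i) each of the $m$ vertices contributes at most $2$, so the sum is at most $2m$. Therefore $(2\ell+1)\cdot 2m/5<2m$, which gives $2\ell+1<5$. This contradicts $2\ell+1\ge5$, so $G$ is bipartite.

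The only delicate step is the arc-parity case analysis in (i), specifically excluding the case where the shortcut cycle has the same length as $C$. That case is handled by the triangle-freeness observation above.
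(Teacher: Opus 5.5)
Your argument is correct and is the classical shortest-odd-cycle proof of Andr\'asfai, Erd\H{o}s and S\'os, which the paper cites rather than reproduces: you prove (i) via the parity of the three arcs cut out by the neighbours, and (ii) via the double count $\sum_{v\in V(C)}d(v)\le 2m$ against $\delta(G)>2m/5$. Two cosmetic points: $2\ell+1\ge5$ holds simply because a $3$-cycle would itself be a triangle, so the remark about a vertex off $C$ is unnecessary, and since $x\notin V(C)$ the closed walk formed with an odd arc is already a genuine odd cycle.
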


A homomorphism \(G\to H\) is a map \(f:V(G)\to V(H)\) that preserves edges: whenever \(xy\in E(G)\), \(f(x)f(y)\in E(H)\). For $j\ge1$, define the Andr\'asfai graph $A_j$ on $\mathbb Z_{3j-1}$ by joining $i$ to $i+j,i+j+1,\ldots,i+2j-1$, with indices taken modulo $3j-1$. Thus $A_1=K_2$.  Each $A_j$ is $j$-regular.  Jin~\cite[p.~479]{Jin} instead uses neighbor differences $1,4,\ldots,3j-2$.  The relabelling $i\mapsto ji\pmod{3j-1}$ sends $1+3r$ to $j+r$ for $0\le r\le j-1$, since $3j\equiv1\pmod{3j-1}$. This explicitly identifies Jin's graph $F_j$ with $A_j$.

\begin{lemma}\label{lem:13over5}
If $G$ is triangle-free and nonbipartite, then $M_4(\ov G)\ge13|G|/5$.
\end{lemma}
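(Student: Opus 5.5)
The plan is to split according to the minimum degree of $G$, with threshold $10m/29$, where $m=|G|$. A pair $(X,Y)$ is cross-complete in $\ov G$ exactly when $G$ has no edge between $X$ and $Y$; in that case $X\cap Y$ is independent in $G$. Lemma~\ref{lem:triangle-degree} is not enough on its own: the elementary pairs from \eqref{eq:elementary-M} combined with $\delta(G)\le 2m/5$ only give about $12m/5$. So the key input will be Jin's structure theorem, which is why $A_j$ and Jin's identification $F_j\cong A_j$ were set up just before the statement. I will use it in this form: a triangle-free graph of order $m$ with $\delta>10m/29$ is homomorphic to some Andr\'asfai graph $A_j$.

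\emph{Low-degree case.} Suppose $\delta(G)\le 10m/29$, and let $v$ be a vertex of minimum degree. Take $X=V(G)\setminus N_G(v)$ and $Y=\{v\}$. Then $v\in X$, $G$ has no edge from $v$ to $X$, and the pair is cross-complete in $\ov G$. This gives
\[
M_4(\ov G)\ge 4\Bigl(m-\tfrac{10m}{29}\Bigr)+1=\tfrac{76m}{29}+1>\tfrac{13m}{5},
\]
since $76/29>13/5$.

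\emph{High-degree case.} Suppose $\delta(G)>10m/29$. By Jin's theorem there is a homomorphism $f:G\to A_j$. Since $G$ is nonbipartite and $A_1=K_2$, we have $j\ge2$. Let $V_i=f^{-1}(i)$ for $i\in\mathbb Z_{3j-1}$; each $V_i$ is independent in $G$, because $A_j$ has no loops. In $A_j$, the vertex $i$ is nonadjacent precisely to the window $W_i=\{i-j+1,\dots,i+j-1\}$, which has $2j-1$ elements and contains $i$. Since $f$ preserves edges, $G$ has no edges between $V_i$ and $X_i=\bigcup_{t\in W_i}V_t$. Hence $(X_i,V_i)$ is cross-complete in $\ov G$ whenever $V_i\ne\varnothing$, and its intersection $V_i$ is independent in $G$. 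Its value is $4|X_i|+|V_i|$.

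Now average over $i$. Each fiber $V_t$ lies in exactly $2j-1$ windows, so
\[
\sum_i\bigl(4|X_i|+|V_i|\bigr)=\bigl(4(2j-1)+1\bigr)m=(8j-3)m .
\]
Thus some $i$ has $4|X_i|+|V_i|\ge \frac{8j-3}{3j-1}m$. The function $j\mapsto(8j-3)/(3j-1)$ is increasing, since its derivative has numerator $8(3j-1)-3(8j-3)=1>0$. It equals $13/5$ at $j=2$, so $4|X_i|+|V_i|\ge 13m/5$ for that $i$.

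\emph{Main obstacle.} The averaging step may land on an index $i$ with $V_i=\varnothing$, and then $Y$ is not allowed. I would handle this by first replacing $A_j$ with the image $f(V(G))$. If some fibers are empty, I would re-route $f$: move the empty labels' neighbours, or compose $f$ with a homomorphism $A_j\to A_{j'}$, $j'<j$, that avoids the missing vertex. If a full re-routing gets messy, the fallback is a weighted averaging over nonempty fibers only. Because nonadjacent windows overlap heavily, empty fibers should only increase the average ratio $|X_i|/m$ over the nonempty indices.

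The other point to verify carefully is the exact form of Jin's theorem being quoted: the threshold $10/29$ and the conclusion ``homomorphic to $F_j$'' for some $j$, with $F_j$ identified with $A_j$ via the relabelling $i\mapsto ji$ described before the statement.
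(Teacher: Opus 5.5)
Your low-degree case, the window count $\sum_i(4|X_i|+|V_i|)=(8j-3)m$ and the monotonicity of $(8j-3)/(3j-1)$ are correct, and they are the paper's argument. The paper splits at $7m/20$ and applies Jin's theorem with target $A_6$; your split at $10m/29$ with target $A_9$ works equally well, since $76/29>13/5$. The gap is exactly the step you call the main obstacle, and it is not a technicality: it is the only place where nonbipartiteness of $G$ can enter. Writing ``$j\ge2$ because $G$ is nonbipartite'' says nothing unless $f$ is onto, because Jin's theorem maps every triangle-free graph with $\delta>10m/29$ into $A_9$, bipartite or not. For $G=K_{m/2,m/2}$, a valid $f$ sends the two sides to the two ends of one edge of $A_9$. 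Your average is then $69m/26>13m/5$. Yet $\ov G$ is two disjoint copies of $K_{m/2}$, every cross-complete pair lies inside one copy, and $M_4(\ov G)=5m/2<13m/5$; all the large values sit at empty fibers. What you must supply is a homomorphism from $G$ \emph{onto} some $A_j$ with $j\ge2$.

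Your second repair is the right idea, but as written it is impossible. There is no homomorphism $A_j\to A_{j'}$ with $j'<j$, because the fractional chromatic number $3-1/j$ of $A_j$ increases with $j$. What you need is that $A_j$ minus any single vertex maps into $A_{j-1}$; this is Lemma~2.1 of Durocher, Gunderson, Li and Skala, which the paper cites. Then choose $j$ minimal with $G\to A_j$. Nonbipartiteness gives $j\ge2$. An empty fiber $V_i$ would let you compose $f$ with $A_j-i\to A_{j-1}$, contradicting minimality, so every fiber is nonempty and your averaging goes through. The fact is also easy to check directly: in $A_j-0$ one has $N(j)\subseteq N(j-1)$ and $N(2j-1)\subseteq N(2j)$. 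Folding $j\mapsto j-1$ and $2j-1\mapsto 2j$ therefore retracts $A_j-0$ onto the subgraph induced by $\mathbb Z_{3j-1}\setminus\{0,j,2j-1\}$, which is $A_{j-1}$ after relabelling in cyclic order.

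Your fallback, averaging over nonempty fibers only, fails in its natural forms. Discarding an empty index $i$ also discards its own window $X_i$, which can be heavy, so empty fibers can lower the average rather than raise it. Take $j=3$, $V_0=\varnothing$, $|V_1|=|V_7|=m/4$, $|V_2|=|V_6|=m/8$, $|V_3|=|V_5|=\varepsilon m$ and $|V_4|=(1/4-2\varepsilon)m$. Realize this by a blow-up of $A_3-0$; it is triangle-free and nonbipartite, with $\delta=(3/8-\varepsilon)m>10m/29$. The plain mean of $4|X_i|+|V_i|$ over the seven nonempty indices is $18m/7$, and the $|V_i|$-weighted mean is $83m/32+O(\varepsilon m)$. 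Both are below $13m/5$, even though $i=1$ gives more than $11m/4$.
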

\begin{proof}
Suppose $M_4(\ov G)<13m/5$, where $m=|G|$. Since $\Delta(\ov{G})+1=m-\delta(G)$, \eqref{eq:elementary-M} implies $\delta(G)>7m/20$. Jin \cite[Theorem 1]{Jin} gives a homomorphism from $G$ to the Andr\'asfai graph $A_6$.

Choose the smallest $j$ for which a homomorphism $f:G\to A_j$ exists. Then $2\le j\le6$, since a homomorphism to $K_2$ would make $G$ bipartite. By Durocher, Gunderson, Li and Skala~\cite[Lemma~2.1]{DGLS}, deleting any vertex of $A_j$ leaves a graph that has a homomorphism to $A_{j-1}$. Therefore every vertex of $A_j$ has a nonempty inverse image under $f$: otherwise, composing the map with this vertex-deletion homomorphism would contradict the choice of $j$.

For each $i\in V(A_j)$, let $Y_i=f^{-1}(i)$ and let $X_i=f^{-1}\bigl(V(A_j)\setminus N_{A_j}(i)\bigr)$. No edge of $G$ joins $X_i$ to $Y_i$, because such an edge would map to an edge incident with $i$ in $A_j$. Hence $(X_i,Y_i)$ is cross-complete in $\ov{G}$. Each vertex of $G$ belongs to exactly $2j-1$ of the sets $X_i$ and exactly one of the sets $Y_i$. Adding their values and dividing by $3j-1$ therefore yields
\[
M_4(\overline G)\ge\frac{\sum_i(4|X_i|+|Y_i|)}{3j-1}
=\frac{8j-3}{3j-1}m\ge\frac{13m}{5}.
\]
This contradicts our assumption.
\end{proof}

We use the following sufficient forms of Lyle's results. A $K_4$-free graph is maximal if adding any missing edge creates a $K_4$.

\begin{lemma}[Lyle~\cite{Lyle}]\label{lem:Lyle}
	Let $G$ be a maximal $K_4$-free graph of order $m$, and let $d=\delta(G)$.
	
	\noindent\hangafter=1\hangindent=1.5em
	{\rm (i)} \hspace{0em}If $d\ge4m/7$, then either $G=G_1\vee G_2$ such that $G_1$ has no edges and $G_2$ is triangle-free, or $\alpha(G)\ge4d-2m$.
	
	\noindent\hangafter=1\hangindent=2.4em
	{\rm (ii)} \hspace{0em}$\alpha(G)\ge\min\left\{\frac d2,\ \frac94d-m,\ \frac{5d-2m}{3},\ \frac52(2d-m)\right\}$.
	
\end{lemma}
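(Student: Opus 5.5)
The plan is to deduce both parts from the local structure of a maximal $K_4$-free graph rather than to reprove Lyle's theorems in full. The basic observation is that for every vertex $v$ the graph $G[N(v)]$ is triangle-free, since a triangle there together with $v$ would form a $K_4$. Degree counting gives, for $u\in N(v)$,
\[
|N(u)\cap N(v)|\ge d(u)+d(v)-m\ge 2d-m,
\]
so $G[N(v)]$ has order $D_v\ge d$ and minimum degree at least $2d-m$. Maximality says that whenever $uw\notin E(G)$, the common neighbourhood $N(u)\cap N(w)$ contains an edge. Every independent set of $G[N(v)]$ is independent in $G$, so all lower bounds on $\alpha(G)$ will come from independent sets inside a single neighbourhood $N(v)$, or from combining two such neighbourhoods using maximality.

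For (ii), fix $v$ and set $H=G[N(v)]$, with $|H|=D\ge d$ and $\delta(H)\ge 2d-m$. The argument splits according to the structure of $H$.
\begin{itemize}
\item If $H$ is bipartite, its larger side is independent and has size at least $D/2\ge d/2$. This gives the first term.
\item If $H$ is not bipartite, then Lemma~\ref{lem:triangle-degree}(ii) forces $2d-m\le\delta(H)\le 2D/5$. I would then consider a shortest odd cycle $C$ of $H$. By Lemma~\ref{lem:triangle-degree}(i), every vertex of $H$ sends at most two edges to $C$. Double counting the edges between $C$ and $H$ then yields independent sets of the form $N_H(x)\cup(\text{part of }N_H(y))$ for suitable vertices $x,y$ of $C$.
\end{itemize}
A direct alternative in the nonbipartite case is a homomorphism of $H$ into a small Andr\'asfai graph, as in the proof of Lemma~\ref{lem:13over5}: preimages of the neighbourhood of a vertex of $A_j$ are independent. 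The resulting bounds are linear in $d$ and $m$. I expect the cases $C_5$ ($j=2$) and $A_3$ ($j=3$), together with the complementary regime in which $D$ is substantially larger than $d$, to produce exactly the terms $\tfrac94d-m$, $\tfrac{5d-2m}{3}$ and $\tfrac52(2d-m)$. The minimum of the four expressions then covers all cases.

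For (i), assume $d\ge 4m/7$, so that $2d-m\ge d/2\cdot\tfrac{2}{7}\cdot\ldots$; in particular $\delta(H)$ is a large fraction of $D$ for every $v$. In the generic case every neighbourhood graph is bipartite by Lemma~\ref{lem:triangle-degree}(ii), since $2d-m>2D/5$ whenever $D$ is not much larger than $d$. I would then compare the bipartitions $N(v)=A_v\cup B_v$ over adjacent vertices $v$ and use maximality, which says that every non-edge has an edge in its common neighbourhood. The goal is a dichotomy:
\begin{itemize}
\item either there is a vertex class $I$ that is independent and complete to $V(G)\setminus I$, in which case $G=\ov{K_{|I|}}\vee G_2$ and $G_2$ is triangle-free because $G$ is $K_4$-free;
\item or some side $A_v$ together with the common neighbours of two nonadjacent vertices gives an independent set of size at least $|N(u)\cap N(v)|\ge 2d-m$, boosted to $4d-2m$ by taking the union of two such disjoint pieces.
\end{itemize}

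The main obstacle is the exact constants. Proving that these specific linear combinations, and not weaker ones, come out of the case analysis is precisely the content of Lyle's theorems. In practice I would therefore verify that the two displayed statements are specialisations of Lyle's published results, the structure theorem for $\delta\ge4m/7$ and the general independence bound for maximal $K_4$-free graphs. The local argument above would serve only as a consistency check of the constants.
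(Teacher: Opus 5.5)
Your fallback of invoking Lyle is the paper's route, but you never carry it out, and (ii) is not a single statement in Lyle's paper. So ``verify that it is a specialisation of the general independence bound'' is not yet a proof.

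\textbf{What the paper actually does.} Part (i) is verbatim Lyle's Theorem~3. For (ii), take a vertex $v$ of \emph{minimum} degree, so $|N(v)|=d$. Split according to whether $G[N(v)]$ contains a $5$-cycle; note this is not a split on bipartiteness.
\begin{itemize}
\item If $G[N(v)]$ has no $5$-cycle, Lyle's Lemma~9 gives $\alpha(G)\ge\min\{d/2,\,17d/7-m\}$. Since $17d/7-m\ge 9d/4-m$, this is at least $\min\{d/2,\,9d/4-m\}$.
\item If it has a $5$-cycle, Lyle's Lemma~10 gives $\min\{9d/4-m,\,(5d-2m)/3,\,\tfrac52(2d-m)\}$.
\end{itemize}
This case split, together with the weakening $17/7\to9/4$, is the entire proof, and it is what your proposal is missing.

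\textbf{Problems with your local sketch.} Don't rely on it even as a consistency check, because its key steps fail.

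In (i), the hypothesis $d\ge4m/7$ gives only $\delta(G[N(v)])\ge 2d-m\ge d/4$. But $d/4<2d/5\le 2|N(v)|/5$. Your inequality $2d-m>2D/5$ fails even for $D=d$ unless $d>5m/8$, so Lemma~\ref{lem:triangle-degree}(ii) cannot be invoked. Nonbipartite neighbourhoods really do occur at the threshold. Take $\ov{K_{2s}}\vee B$, where $B$ is the blow-up of $C_5$ by independent classes of size $s$. Then $m=7s$ and $\delta=4s=4m/7$, the graph is maximal $K_4$-free, and every vertex of $\ov{K_{2s}}$ has neighbourhood $B$. This is a join, consistent with (i), but it shows that bipartiteness cannot come from a degree count inside one neighbourhood. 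Similarly, two disjoint independent sets of size $2d-m$ have independent union only if no edges join them. Establishing that is precisely the content of Lyle's Theorem~3.

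In (ii), mapping $H$ into an Andr\'asfai graph needs Jin's hypothesis, which requires $\delta(H)>10|H|/29$ at the very least; the proof of Lemma~\ref{lem:13over5} uses $\delta>7|G|/20$ for $A_6$. However, (ii) has content only when $d>m/2$. In that range your available bound $\delta(H)\ge2d-m$ can be an arbitrarily small fraction of $|H|\ge d$. So the nonbipartite case, which carries three of the four terms, rests only on ``I expect''.
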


\begin{proof}
	Part (i) is Lyle's Theorem 3. For (ii), choose a vertex of minimum degree in $G$. If the neighborhood contains no 5-cycle, Lyle's Lemma~9 gives $\alpha(G)\ge\min\{d/2,\,17d/7-m\}\geq \min\{d/2,\,9d/4-m\}$. If the neighborhood contains a 5-cycle, Lyle's Lemma~10 gives the minimum of the last three expressions in (ii).  
\end{proof}

\begin{lemma}\label{lem:alpha3}
	Let $H$ be a graph of order $m$ with $\alpha(H)\le3$, and let $D=\Delta(H)+1$.
	\\
	{\rm (i)} $M_4(H)\ge5m/3$.\\
	{\rm (ii)} If $m/D>5/2$, then $\omega(H)\ge m/3$.
\end{lemma}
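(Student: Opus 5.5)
The plan is to prove part (ii) first and then derive part (i) from a case split on $D$, using the clique found in (ii) in the sparse case. The recurring device is that the non-neighbourhood of any vertex induces a graph with independence number at most $2$: if $v\in V(H)$ and $W=V(H)\setminus N_H[v]$, then an independent triple inside $W$ together with $v$ would be an independent $4$-set. Hence $\alpha(H[W])\le2$, $\ov{H[W]}$ is triangle-free, and $|W|\ge m-D$. This lets me feed $W$ into Lemma~\ref{lem:alpha2} and Lemma~\ref{lem:triangle-degree}.

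\emph{Part (ii).} Assume $m/D>5/2$, i.e.\ $D<2m/5$, and fix a vertex $v$ with $W$ as above, so $|W|>3m/5$. Every $w\in W$ has at most $D-1$ neighbours in $W$. Therefore in $\ov{H[W]}$ its degree is at least $|W|-D$, and $|W|-D>|W|-\tfrac23|W|=|W|/3$. This is not yet the $2|W|/5$ required by Lemma~\ref{lem:triangle-degree}(ii). So I would either choose $v$ more carefully (for instance of maximum degree, so that $|W|=m-D$ exactly) or run the argument inside the common non-neighbourhood of two nonadjacent vertices. Once $\ov{H[W]}$ is shown to be bipartite, $W$ is covered by two cliques $P,Q$ of $H$. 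The target is then to show that one of them has at least $m/3$ vertices.

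For that last step I would exploit that every vertex of $P$ misses at least $|W|-D$ vertices of $Q$, and likewise with $P$ and $Q$ swapped, and combine this with a maximum independent set $\{a,b,c\}$. Every vertex of $H$ is adjacent to or equal to one of $a,b,c$, which gives a three-clique-like partition of $V(H)$. If the bipartiteness route falls short, I would instead use Lemma~\ref{lem:triangle-degree}(i) on a shortest odd cycle of $\ov{H[W]}$, which has length at least $5$. Since each vertex sees at most two vertices of that cycle in the complement, the cycle's vertices have large common neighbourhoods in $H$, and I would try to grow a clique of size $m/3$ from them.

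\emph{Part (i).} If $\omega(H)\ge m/3$, the pair $(Q,Q)$ for a maximum clique $Q$ already gives $M_4(H)\ge5\omega(H)\ge5m/3$ by \eqref{eq:elementary-M}. Otherwise, by (ii) we have $D\ge2m/5$, and \eqref{eq:elementary-M} gives $M_4(H)\ge4D+1\ge8m/5$. This falls short of $5m/3$ by $m/15$.

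To close this gap I would combine two pairs. The first is $(N[v],\{v\})$ for $v$ of maximum degree. The second is the pair $(X,Y)$ supplied inside $W$ by the proof of Lemma~\ref{lem:alpha2}, which satisfies $3|X|+|Y|\ge2|W|$ and $|X|\ge|Y|$. Balancing $4D$ against $\tfrac52(m-D)$ alone only gives $20m/13$. So the key extra input must be the structural version of (ii) in the range $2m/5\le D<5m/12$: there $W$ is nearly covered by two cliques, and one of these cliques plus its common neighbourhood should give the missing $m/15$.

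The main obstacle is the middle step of (ii): upgrading the degree bound in $\ov{H[W]}$ to the $2|W|/5$ threshold of Lemma~\ref{lem:triangle-degree}(ii), or bypassing it via the shortest-odd-cycle argument, and extracting a single clique of size $m/3$ rather than two of size about $3m/10$. Everything else is bookkeeping with \eqref{eq:elementary-M}.
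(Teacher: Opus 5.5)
Your proposal has a genuine gap. Neither part is actually proved: both stop exactly at the step you flag as the main obstacle, and that obstacle cannot be overcome by working locally. For $v$ of maximum degree, $W=V(H)\setminus N_H[v]$ has $|W|=m-D$ and $\delta(\overline{H[W]})\ge m-2D$. This lower bound exceeds $2|W|/5$ only when $D<3m/8$. Even if $\overline{H[W]}$ were bipartite, covering $W$ by two cliques gives only $\omega(H)\ge(m-D)/2$. The common non-neighbourhood of two nonadjacent vertices is itself a clique, but of guaranteed size only $m-2D$. Both bounds reach $m/3$ only when $D\le m/3$, well short of the hypothesis $D<2m/5$. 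In (i), your pairs give at best $\max\{4D,\tfrac52(m-D)\}\ge 20m/13$, or $8m/5$ after invoking (ii). Your suggestions for the missing $m/15$ (the set $\{a,b,c\}$, growing a clique from a shortest odd cycle, a clique plus its common neighbourhood) are not arguments.

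The missing idea is global structure, supplied by Lemma~\ref{lem:Lyle}(i), which precedes this lemma for exactly this purpose. Add edges to $\overline H$ to obtain a maximal $K_4$-free graph $G$ on $V(H)$; then $\delta(G)\ge m-D$ and $\alpha(G)\le\omega(H)=a$.

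For (ii), suppose $a<m/3$. Then $\delta(G)>3m/5>4m/7$, so either $\alpha(G)\ge 4\delta(G)-2m>2m/5>a$, or $G=G_1\vee G_2$ with $G_1$ edgeless and $G_2$ triangle-free. In the join case put $b=|G_1|\le a<m/3$. Every vertex of $G_2$ has degree in $G_2$ at least $\delta(G)-b>2(m-b)/5=2|G_2|/5$. Hence $G_2$ is bipartite by Lemma~\ref{lem:triangle-degree}(ii), and $\alpha(G)\ge(m-b)/2>m/3>a$, a contradiction. Removing the uniformly joined part $G_1$ is what lifts the relative degree past $2/5$; no non-neighbourhood of one or two vertices achieves this.

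For (i), do not route through (ii). If $M_4(H)<5m/3$, then \eqref{eq:elementary-M} gives $D<5m/12$ and $a<m/3$, so $\delta(G)>7m/12>4m/7$. The non-join case gives $\alpha(G)>m/3>a$. In the join case $|G_2|>2m/3$, and Lemma~\ref{lem:alpha2} applied to $\overline{G_2}\subseteq H$ gives $M_4(H)\ge 5|G_2|/2>5m/3$. This settles your problematic range $2m/5\le D<5m/12$ directly.
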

\begin{proof}
Let $a=\omega(H)$. For (i), suppose $M_4(H)<5m/3$.
Inequalities~\eqref{eq:elementary-M} give $D<5m/12$ and
$a<m/3$.
Add edges to $\ov H$ to obtain a maximal $K_4$-free graph $G$.
Then $\delta(G)\ge m-D>7m/12>4m/7$ and
$\alpha(G)\le\alpha(\ov H)=a$.
The non-join alternative of Lemma~\ref{lem:Lyle}(i) gives
$\alpha(G)\ge4\delta(G)-2m>m/3>a$, a contradiction to
$\alpha(G)\le\omega(H)=a$.
In the join alternative, $G=G_1\vee G_2$ such that $G_1$ has
no edges and $G_2$ is triangle-free. Since $|G_1|\leq \alpha(G)\leq a<m/3$, we have $|G_2|>2m/3$. The graph $\overline{G_2}$ is a subgraph of $H$,
and $\alpha(\ov{G_2})\leq 2$. Lemma~\ref{lem:alpha2}
therefore implies $M_4(H)\geq M_4(\overline{G_2})\ge5|G_2|/2>5m/3$, again a contradiction.

For (ii), suppose $a<m/3$ and use the same maximal extension $G$.
Now $\delta(G)\ge m-D>3m/5>4m/7$.
The non-join alternative of Lemma~\ref{lem:Lyle}(i) gives $\alpha(G)>2m/5>a$, contradicting
$\alpha(G)\le\omega(H)=a$.
In the join alternative, we take $b=|G_1|\le a$.
Then $\delta(G_2)>3m/5-b>2(m-b)/5=2|G_2|/5$ because $b<m/3$.
Lemma~\ref{lem:triangle-degree}(ii) implies that $G_2$ is bipartite.
Hence $\alpha(G)\ge|G_2|/2>m/3>a$, again a contradiction.
\end{proof}

\section{Private cliques and auxiliary lemmas}\label{sec:private}

Let $H$ have order $n$, independence number $q$, clique number $a$, and $D=\Delta(H)+1$.  Fix a maximum independent set $I=\{v_1,\ldots,v_q\}$.  Define the corresponding \emph{private cliques} by $P_i=\{z\in V(H):N_H[z]\cap I=\{v_i\}\}$, and let $p_i=|P_i|$ and $e_i=D-p_i$. Each $P_i$ is a clique, since two nonadjacent vertices in $P_i$ could replace $v_i$ in $I$ and increase the cardinality of the independent set.

For $z\in V(H)$, let $r(z)=|N_H[z]\cap I|$.  Maximality of $I$ ensures $r(z)\ge1$.  Let $h=\sum_{r(z)\ge3}(r(z)-2)$.  By the definitions of $P_i$ and $h$, we have $\sum_z r(z)=2n-\sum_i p_i+h$. On the other hand, counting the same incidences at the vertices of $I$ implies $\sum_z r(z)=\sum_i|N_H[v_i]|\le qD$. Since $e_i=D-p_i$, $p_i\le a$ and $h\geq 0$, we have
\begin{equation}\label{eq:private-count}
	\sum_{i=1}^q e_i\le2qD-2n-h,\qquad
	e_i\ge D-a,\qquad a+D\ge\frac{2n}{q}.
\end{equation} If only $\alpha(H)\le q$ is known, the last inequality still holds, by applying the argument to a maximum independent set of cardinality at most $q$.

Suppose now that a maximum clique $Q$ is disjoint from $I$.  Call an index $i$ \emph{bad} if $P_i\cap Q\ne\emptyset$, and \emph{good} otherwise; we call $e_i$ a \emph{bad value} if $i$ is bad, and a \emph{good value} if $i$ is good. Let $B$ be the set of bad indices and let $q_i=|P_i\cap Q|$ for $i\in B$. If $z\in P_i\cap Q$, then $P_i\cup Q\subseteq N_H[z]$. Thus
$D\ge p_i+a-q_i$, which implies $q_i\ge a-e_i$. The private cliques are disjoint by their definition, so $\sum_{i\in B}q_i\le a$. Summing $q_i\ge a-e_i$ over $B$ and then using this upper bound yields
\begin{equation}\label{eq:bad-count}
	\sum_{i\in B}e_i\ge(|B|-1)a.
\end{equation}
Since $P_i\cup Q\subseteq N_H[z]$ for any $z\in P_i\cap Q$, we have $(P_i\cup Q,P_i\cap Q)$ is cross-complete in $H$. Hence, if $M_4(H)<\Lambda$, then $4(p_i+a-q_i)+q_i\leq M_4(H)<\Lambda$. Substituting $p_i=D-e_i$ yields \begin{equation}\label{eq:bad-weight}
	e_i>D-\frac{\Lambda}{4}+a-\frac34q_i\qquad(i\in B).
\end{equation}

\begin{lemma}\label{lem:two-exchange}
Suppose $z\in Q$ is adjacent to exactly $v_i,v_j$ in $I$, where $i,j$ are good.  Then $e_i+e_j\ge D$.
\end{lemma}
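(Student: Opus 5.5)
The plan is a direct counting argument around the vertex $z$. I will split $P_i\cup P_j$ into the part inside $N_H(z)$ and the part outside it. The inside part will be charged against $|N_H[z]|\le D$. The outside part will be shown to be a clique, using the maximality of $I$, and so has size at most $a$. Together these give $p_i+p_j\le D$, which is exactly $e_i+e_j\ge D$ because $e_i=D-p_i$.

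\emph{Step 1: the vertices $z$ sees.} Since $r(z)=2$, the vertex $z$ lies in no private clique. The closed neighbourhood $N_H[z]$ contains the following sets.
\begin{itemize}
\item The whole clique $Q$, because $z\in Q$.
\item The two vertices $v_i$ and $v_j$. These are not in $Q$, because $Q$ is disjoint from $I$.
\item The sets $P_i\cap N_H(z)$ and $P_j\cap N_H(z)$.
\end{itemize}
These sets are pairwise disjoint. Indeed, $i$ and $j$ are good, so $P_i\cap Q=P_j\cap Q=\varnothing$. Private cliques are disjoint from each other. Finally, $P_i$ and $P_j$ avoid $I$: a vertex $v_k\in P_i$ would need $N_H[v_k]\cap I=\{v_i\}$, but $N_H[v_k]\cap I=\{v_k\}$, and $v_k=v_i$ is impossible because $v_i$ is adjacent to $z\in Q$ and hence has a neighbour outside $I$ only if\ldots\ more simply, $v_i\notin P_i$ is not needed; what is needed is $v_i\notin N_H(z)\cap P_i$ being double counted. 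To avoid this subtlety, I will count $v_i,v_j$ only if they lie outside $P_i\cup P_j$, and otherwise drop them. In either case we get
\[
D\ \ge\ |N_H[z]|\ \ge\ a+|P_i\cap N_H(z)|+|P_j\cap N_H(z)|.
\]

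\emph{Step 2: the vertices $z$ misses form a clique (the key step).} Let $A_i=P_i\setminus N_H(z)$ and $A_j=P_j\setminus N_H(z)$. Suppose some $x\in A_i$ and $y\in A_j$ are nonadjacent; note $x\ne y$ since $P_i\cap P_j=\varnothing$. Consider
\[
I'=\bigl(I\setminus\{v_i,v_j\}\bigr)\cup\{x,y,z\}.
\]
Within $I$, the vertex $x$ is adjacent only to $v_i$, the vertex $y$ only to $v_j$, and $z$ only to $v_i,v_j$. The three vertices $x,y,z$ are pairwise nonadjacent. Hence $I'$ is independent of size $q+1$, contradicting the maximality of $I$.

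So $(A_i,A_j)$ is cross-complete whenever both sets are nonempty. Since $P_i$ and $P_j$ are cliques, $A_i\cup A_j$ is a clique, and therefore $|A_i|+|A_j|\le a$.

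\emph{Step 3: combine.} Adding the two bounds gives
\[
p_i+p_j=|A_i|+|A_j|+|P_i\cap N_H(z)|+|P_j\cap N_H(z)|\le a+(D-a)=D.
\]
Hence $e_i+e_j=2D-p_i-p_j\ge D$.

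I expect the exchange in Step 2 to be the only real idea. The remaining care is the disjointness bookkeeping in Step 1, and that is exactly where the goodness of $i$ and $j$ is used.
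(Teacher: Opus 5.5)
Your proof is correct and is essentially the paper's argument, run directly rather than by contradiction: $Q\subseteq N_H[z]$ is disjoint from both good private cliques, so at most $D-a$ vertices of $P_i\cup P_j$ lie in $N_H(z)$, and the rest form a clique (hence at most $a$ vertices) by the same exchange $(I\setminus\{v_i,v_j\})\cup\{x,y,z\}$. Your aside that $P_i$ avoids $I$ is false ($v_i\in P_i$ always), but you rightly discard it: the displayed bound needs only that $Q$, $P_i\cap N_H(z)$ and $P_j\cap N_H(z)$ are pairwise disjoint, and $v_i\in N_H(z)$ is what guarantees $x\neq v_i$ in the exchange.
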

\begin{proof}
If $e_i+e_j<D$, then $p_i+p_j>D$.  Let $W_i=P_i\setminus N_H(z)$ and $W_j=P_j\setminus N_H(z)$. The vertex $z$ has at most $D-a$ neighbors outside $Q$. As $p_i,p_j\le a$, the inequality $p_i+p_j>D$ also gives $p_i,p_j>D-a$. Since $i,j$ are good, $P_i\cap Q=P_j\cap Q=\emptyset$.  Thus both $W_i,W_j$ are nonempty and $|W_i|+|W_j|\ge p_i+p_j-(D-a)>a$. If all cross-edges are present, $W_i\cup W_j$ is a clique larger than $Q$. Otherwise choose nonadjacent vertices $w_i\in W_i$ and $w_j\in W_j$. By the definition of the private cliques, $w_i,w_j$ are nonadjacent to $I\setminus\{v_i,v_j\}$. Since $z$ is adjacent to exactly $v_i,v_j$, $z$ has no neighbor in $I\setminus\{v_i,v_j\}$. Therefore, $(I\setminus\{v_i,v_j\})\cup\{w_i,w_j,z\}$ is an independent set of size $q+1$. Both possibilities are impossible.
\end{proof}

We say that $X$ dominates $Y$ if every vertex in $Y\setminus X$ has a neighbor in $X$.

\begin{lemma}\label{lem:three-exchange}
Let $J$ consist of three good indices, and let $E_J=\sum_{i\in J}e_i$.
Then $I\setminus\{v_i:i\in J\}$ dominates $Q$ under either of the following sets of assumptions:\\
{\rm (i)} $M_4(H)<\Lambda$, $a<\Lambda/5$, and $E_J<2D+a-2\Lambda/5$;\\
{\rm (ii)} $E_J<\min\{a+D/3,\,2D-a\}$.
\end{lemma}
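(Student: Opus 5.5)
The plan is to argue by contradiction. Suppose some $z\in Q$ has no neighbour in $I\setminus\{v_i:i\in J\}$. Since $Q\cap I=\varnothing$, $z\notin I$. Maximality of $I$ then gives $1\le r(z)\le 3$, with $N_H[z]\cap I\subseteq\{v_i:i\in J\}$. Because every $i\in J$ is good, $z\notin P_i$, so in fact $r(z)\in\{2,3\}$.

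For $i\in J$ put $W_i=P_i\setminus N_H(z)$. As in the proof of Lemma~\ref{lem:two-exchange}, $z$ has at most $D-a$ neighbours outside $Q$, and each $P_i$ ($i\in J$) is disjoint from $Q$. Hence
\[
|W|:=\sum_{i\in J}|W_i|\ \ge\ \sum_{i\in J}p_i-(D-a)=2D+a-E_J .
\]
Now suppose there are pairwise nonadjacent $w_i\in W_i$, one for each $i\in J$. Each $w_i$ is nonadjacent to $I\setminus\{v_i\}$ by privacy, and nonadjacent to $z$ by construction. Also $z$ has no neighbour in $I\setminus\{v_i:i\in J\}$. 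So $(I\setminus\{v_i:i\in J\})\cup\{w_i:i\in J\}\cup\{z\}$ is independent of size $q+1$, a contradiction. If $r(z)=2$, say $N_H[z]\cap I=\{v_i,v_j\}$, the same exchange needs only $w_i,w_j$, and we may simply quote Lemma~\ref{lem:two-exchange} after checking $e_i+e_j<D$ from the hypotheses. Since each $W_i$ is a clique, any independent set of size $3$ in $H[W]$ takes one vertex from each $W_i$. Hence in the remaining case $\alpha(H[W])\le 2$, and the task is to show this is incompatible with $|W|>2D+a-E_J$.

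Under (i), apply Lemma~\ref{lem:alpha2} to $H[W]$. This gives $M_4(H)\ge M_4(H[W])\ge \tfrac52|W|>\tfrac52(2D+a-E_J)>\tfrac52\cdot\tfrac{2\Lambda}{5}=\Lambda$, contradicting $M_4(H)<\Lambda$. If some $W_i$ is empty we fall back to the two-index exchange or to a large clique. The hypothesis $a<\Lambda/5$ should be used exactly there: it excludes the alternative that $W$ (or two of the $W_i$ together) already contains a clique larger than $a$, and it justifies $p_i>D-a$ type nonemptiness.

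Under (ii) we need a purely combinatorial bound. Here $|W|>\max\{2a,\,5D/3\}$, since $E_J<2D-a$ and $E_J<a+D/3$. Fix $w\in W_1$. Its non-neighbours in $W_2\cup W_3$ form a clique, because $\alpha(H[W])\le 2$ and $w$ is nonadjacent to all of them; call this clique $A$. Its neighbours number at most $D-1$. I would then refine the crude bound $|W|\le D+a$ by choosing $w$ cleverly, for instance in the smallest part or with fewest non-neighbours, and by noting that $A\cup(W_1\setminus N(a'))$-type sets are also cliques. The aim is to show either $|W_2|+|W_3|$ yields a clique of size $>a$ (using $|W|>2a$), or $w$ has degree at least $D$ (using $|W|>5D/3$). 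This last step, pinning down the right counting in case (ii), is the main obstacle. I would first try averaging the degree inequality over $w$ ranging through all three parts, combined with the clique bound on non-neighbourhoods.
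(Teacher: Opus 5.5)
Your setup and your treatment of case (i) match the paper's proof, but case (ii) has a genuine gap. You reduce it to showing that $\alpha(H[W])\le 2$ is incompatible with $|W|>\max\{2a,5D/3\}$. You then note that the non-neighbourhood bound gives only $|W|\le D+a$, which is not a contradiction, and you stop there. The missing idea is to pass to the complement. If $\alpha(H[W])\le 2$, then $\overline{H[W]}$ is triangle-free. Every vertex of $W$ has at most $D-1$ neighbours in $H$, so the minimum degree of $\overline{H[W]}$ is at least $|W|-D>2|W|/5$; this is exactly where $|W|>5D/3$ is used. By Lemma~\ref{lem:triangle-degree}(ii) (Andr\'asfai--Erd\H{o}s--S\'os), $\overline{H[W]}$ is bipartite. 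Hence $W$ is covered by two cliques of $H$, one of which has at least $|W|/2>a$ vertices, contradicting $\omega(H)=a$. No clever choice of $w$ or averaging over the three parts is needed.

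Two smaller points. In the subcase $r(z)=2$ you defer the check $e_i+e_j<D$. It holds because both hypotheses imply $E_J<2D-a$, and the third index $\ell\in J$ has $e_\ell\ge D-a$ by \eqref{eq:private-count}. Under (i), deriving $E_J<2D-a$ is precisely where $a<\Lambda/5$ is used, since $2\Lambda/5>2a$. The role you assign to that hypothesis (excluding large cliques inside $W$, making the $W_i$ nonempty) is not what the proof needs. Empty $W_i$ also need no fallback. If some $W_i=\varnothing$, then $W$ is a union of at most two cliques, so $\alpha(H[W])\le 2$ holds automatically. The same contradiction then applies unchanged: Lemma~\ref{lem:alpha2} in case (i), and a clique of size at least $|W|/2>a$ in case (ii).
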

\begin{proof}
Suppose $z\in Q$ has no neighbor in $I\setminus\{v_i:i\in J\}$. Since $I$ is a maximal independent set disjoint from $Q$, $z$ has at least one neighbor in \(\{v_i:i\in J\}\).  There cannot be just one such neighbor, since that neighbor would have a bad index.  Both (i) and (ii) imply $E_J<2D-a$.  If there are exactly two neighbors, the remaining index $\ell\in J$ satisfies $e_\ell\ge D-a$ by \eqref{eq:private-count}, so Lemma~\ref{lem:two-exchange} gives a contradiction.

Thus $z$ is adjacent to $v_i$ for every $i\in J$.  Let $W=\bigcup_{i\in J}(P_i\setminus N_H(z))$ and $w=|W|$. Then the three private cliques are disjoint and lie outside $Q$, and $z$ has at most $D-a$ neighbors outside $Q$. Since $e_i=D-p_i$, we have
$w\ge3D-E_J-(D-a)=2D+a-E_J$. Under the assumption (i), $w>2\Lambda/5$, and Lemma~\ref{lem:alpha2} rules out $\alpha(H[W])\le2$. Otherwise, $M_4(H)\geq M_4(H[W])\geq 5w/2>\Lambda$, a contradiction. Under the assumption (ii), $w>5D/3$ and $w>2a$.  If $\alpha(H[W])\le2$, $\ov{H[W]}$ is triangle-free with minimum degree at least $w-D>2w/5$, and Lemma~\ref{lem:triangle-degree}(ii) implies that $\ov{H[W]}$ is bipartite.  This gives a clique in $H[W]$ of cardinality at least $w/2>a$, also impossible.

Thus $H[W]$ has an independent set of three vertices, with one vertex in each of the three private cliques. By the definition of $W$, replacing \(\{v_i:i\in J\}\) by that independent set of three vertices and adding $z$ gives an independent set of $H$ of cardinality $q+1$, a contradiction.
\end{proof}

\begin{lemma}\label{lem:critical-edge}
Suppose $\alpha(H)=q$, the complement $\ov H$ is maximal $K_{q+1}$-free, and $Q$ is a maximum clique in $H$.  Suppose $zx\in E(H)$, where $z\in Q$ and $x\notin Q$. There is a maximum independent set $I$ disjoint from $Q$ for which at least one private clique meets $Q$.
\end{lemma}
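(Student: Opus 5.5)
The plan is to use the maximality of $\ov H$ directly on the given edge $zx$. Since $\ov H$ is maximal $K_{q+1}$-free and $zx\in E(H)$, the pair $zx$ is a non-edge of $\ov H$. Adding it to $\ov H$ creates a copy of $K_{q+1}$, and this copy must use the new edge. Translated back to $H$, the graph $H-zx$ has an independent set $S$ of cardinality $q+1$ with $z,x\in S$. Equivalently, $S$ is a set of $q+1$ vertices whose only edge in $H$ is $zx$.

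I would then take $I=S\setminus\{z\}$. This set has cardinality $q$ and contains no edge of $H$, because the only edge of $H$ inside $S$ is $zx$. Since $\alpha(H)=q$, $I$ is a maximum independent set, so the private cliques $P_i$ of Section~\ref{sec:private} are defined with respect to it.

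Next I would check that $I$ is disjoint from $Q$.
\begin{itemize}
\item By hypothesis $x\notin Q$.
\item Suppose some $y\in I\setminus\{x\}$ lies in $Q$. Then $y\neq z$ because $z\notin I$. Since $y$ and $z$ are distinct vertices of the clique $Q$, we get $yz\in E(H)$.
\item But $y,z\in S$ and $yz\neq zx$, so $yz$ cannot be an edge of $H$. This contradiction shows $I\cap Q=\varnothing$.
\end{itemize}

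Finally, $N_H[z]\cap I=\{x\}$. Indeed, $z\notin I$, $z$ is adjacent to $x$, and $z$ is nonadjacent to every other vertex of $S$. Writing $x=v_i$, this says $z\in P_i$ by the definition of private cliques. Since $z\in Q$, the private clique $P_i$ meets $Q$, so $i$ is a bad index, as required.

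I expect no real obstacle here. The only delicate point is translating maximality correctly: the new $K_{q+1}$ in $\ov H+zx$ must contain both $z$ and $x$, since otherwise $\ov H$ itself would contain $K_{q+1}$. This gives the independent set of size $q+1$ in $H-zx$ through $z$ and $x$, and the rest is bookkeeping with the definitions.
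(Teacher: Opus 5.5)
Your proof is correct and follows essentially the same route as the paper's. Both use maximality of $\ov H$ to obtain an independent set of size $q+1$ in $H-zx$ through $z$ and $x$, delete $z$ to get $I$, and observe that $I\cap Q=\varnothing$ and $N_H[z]\cap I=\{x\}$. Your explicit verification of the disjointness $I\cap Q=\varnothing$ is just a slightly more detailed version of the paper's one-line remark.
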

\begin{proof}
Adding $zx$ to $\overline H$ creates a clique of size $q+1$. Equivalently, deleting $zx$ from $H$ creates an independent set
of cardinality $q+1$. The independent set contains both $z$ and $x$, since $H$ has independence number $q$. Remove the endpoint $z$ to obtain $I$.  Every remaining vertex other than $x$ is nonadjacent to $z$, and $I\cap Q=\emptyset$.  The vertex $z$ has just one neighbor in $I$, namely $x$, and lies in the private clique corresponding to $x$.
\end{proof}

\section{Proof of the main result}\label{mainresult}

Before proving the main result, we establish three auxiliary theorems.

\begin{theorem}\label{thm:alpha4}
If a graph $H$ satisfies $\alpha(H)\le4$, then $M_4(H)\ge5|H|/4$.
\end{theorem}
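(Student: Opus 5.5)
We argue by contradiction, with the machinery of Section~\ref{sec:private}. Let $n=|H|$ and $\Lambda=5n/4$, and suppose $M_4(H)<\Lambda$. Adding edges to $H$ cannot decrease $M_4$ (cross-complete pairs persist). Also, passing to a supergraph with $\alpha\le 4$ keeps the hypothesis. So we may assume $\alpha(H)=q\le 4$ and that $\ov H$ is maximal $K_{q+1}$-free. If $q\le 3$, Lemma~\ref{lem:alpha3}(i) and Lemma~\ref{lem:alpha2} already give $M_4(H)\ge 5n/3$, so assume $q=4$.

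Write $a=\omega(H)$ and $D=\Delta(H)+1$. From \eqref{eq:elementary-M} we get $a<n/4$ and $D<5n/16$. However, \eqref{eq:private-count} gives $a+D\ge 2n/4=n/2$, so both $a$ and $D$ are forced into narrow windows: $a>3n/16$ and $D>n/4$. We then pick a maximum clique $Q$. Since $D>a$, some $z\in Q$ has a neighbour outside $Q$, so Lemma~\ref{lem:critical-edge} supplies a maximum independent set $I=\{v_1,\dots,v_4\}$ disjoint from $Q$ with at least one bad index. Hence $|B|\ge1$.

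The main step is counting the values $e_i$. From \eqref{eq:private-count} we have $\sum e_i\le 8D-2n-h<8D-2n$, which is small: $8D-2n<n/2$. On the other hand, each $e_i\ge D-a$. Bad indices satisfy both \eqref{eq:bad-count}, which gives $\sum_{B}e_i\ge(|B|-1)a$, and \eqref{eq:bad-weight}, which gives $e_i>D-\Lambda/4+a-\tfrac34 q_i$ with $\sum q_i\le a$. I would split into cases by $|B|\in\{1,2,3,4\}$.
\begin{itemize}
\item If $|B|\ge 2$, summing \eqref{eq:bad-count} with $e_i\ge D-a$ on the good indices should exceed $8D-2n$, using $a>3n/16$ and $D<5n/16$.
\item If $|B|=1$, there are three good indices $J$. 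We want their sum $E_J$ to be small enough to apply Lemma~\ref{lem:three-exchange}(i), that is, $E_J<2D+a-n/2$, noting $a<\Lambda/5=n/4$. Then $I\setminus\{v_i:i\in J\}=\{v_b\}$ dominates $Q$. So every vertex of $Q$ is adjacent to $v_b$, giving $D\ge a+1$ together with $P_b$. This forces $q_b$ large, and \eqref{eq:bad-weight} with $q_b=a$ then contradicts the bounds.
\item If the $E_J$ bound fails, then $e_b$ must be small. Combining this with \eqref{eq:bad-weight} (with $q_b\le a$) should give the contradiction.
\end{itemize}
The term $h$ and the vertices $z$ with $r(z)=2$ handled by Lemma~\ref{lem:two-exchange} give additional slack if needed.

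I expect the main obstacle to be the $|B|=1$ case, where the constants are tight. There we must verify that the inequality chain $E_J\le 8D-2n-e_b$ together with the lower bound on $e_b$ from \eqref{eq:bad-weight} indeed yields $E_J<2D+a-2\Lambda/5$ throughout the admissible region of $(a,D)$. If the linear bookkeeping falls short near the boundary $a+D\approx n/2$, I would fall back on Lemma~\ref{lem:three-exchange}(ii). Alternatively, I would apply Lemma~\ref{lem:alpha3}(i) to $H-N[v_b]$, which has $\alpha\le3$: this gives $M_4\ge\tfrac53(n-D)$, and since $D<5n/16$ this already yields $\tfrac53\cdot\tfrac{11n}{16}>5n/4$. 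This last route actually seems decisive on its own, because every vertex has closed neighbourhood of size at most $D$ and removing $N[v]$ for $v$ in a maximum independent set drops $\alpha$ to at most $3$. So I would try it first, and keep the private-clique count only as a backup.
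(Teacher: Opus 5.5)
Your framework (critical edge, private cliques, case split by $|B|$, Lemma~\ref{lem:three-exchange}(i) in the one-bad case) matches the paper's, but the proposal has a genuine gap, and it is exactly where the difficulty lies.

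\textbf{The route you call decisive fails.} It rests on an arithmetic error: $\tfrac53\cdot\tfrac{11}{16}=\tfrac{55}{48}<\tfrac{60}{48}=\tfrac54$. It also cannot be repaired. You have already shown $D>n/4$, so the bound $M_4(H)\ge\tfrac53(n-D)$ from Lemma~\ref{lem:alpha3}(i) on $H-N_H[v]$ is always below $\tfrac53\cdot\tfrac34n=\tfrac54n$.

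\textbf{The backup count does not close either.} Only $|B|\ge3$ follows from \eqref{eq:bad-count} and $e_i\ge D-a$. Take $D=0.31n$ and $a=0.2n$, which satisfy $a<n/4$, $D<5n/16$ and $a+D\ge n/2$. For $|B|=2$, combining \eqref{eq:bad-count}, \eqref{eq:bad-weight} with $q_1+q_2\le a$, and $e_i\ge D-a$ on the good indices gives only $\sum_ie_i>0.465n$, against the upper bound $8D-2n=0.48n$. For $|B|=1$, the best you get is $E_J\le 8D-2n-e_b\le0.37n$. Here $e_b\ge D-a$ is already stronger than \eqref{eq:bad-weight} with $q_b\le a$. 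But Lemma~\ref{lem:three-exchange}(i) needs $E_J<2D+a-n/2=0.32n$. Conversely, $E_J\ge0.32n$ only forces $e_b\le0.16n$, which is consistent with $e_b\ge0.11n$. So the steps you leave as ``should give the contradiction'' are not linear consequences of the inequalities of Section~\ref{sec:private}.

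\textbf{Two structural inputs are missing.}
\begin{enumerate}
\item For nonadjacent $r_1,r_2$, the vertices outside $N_H[r_1]\cup N_H[r_2]$ induce a graph with independence number at most two. Lemma~\ref{lem:alpha2} and $M_4(H)<5n/4$ then force $|N_H[r_1]\cap N_H[r_2]|<L=2D-n/2$. Hence every $x\notin Q$ has fewer than $L$ neighbours in $Q$; this is \eqref{eq:a4-external}. In the case $|B|=1$ this sharpens \eqref{eq:bad-weight} via $q_b<L$ and shows that any two good values sum to less than $D$. Lemma~\ref{lem:two-exchange} then makes every vertex of $Q\setminus N_H(v_b)$ have three neighbours in $I$, so $h>a-L$. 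Only with this lower bound on $h$ does the required bound on $E_J$ come out.
\item The case $|B|=2$ needs Claim~\ref{cl:a4-forbidden}: every $U$ with $\alpha(H[U])\le2$ has $|U|<n-2D+a/2$. Its proof uses Lemma~\ref{lem:13over5} (Jin's homomorphism theorem), a shortest odd cycle and a five-vertex degree count. It is applied to $U=V(H)\setminus N_H[\{v_2,z\}]$, where $v_2$ sees at least $a/2$ vertices of $Q$ and $z\in Q\setminus N_H(v_2)$.
\end{enumerate}

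\textbf{Two smaller slips.} To make $\ov H$ maximal $K_5$-free you must delete edges of $H$, which keeps $M_4(H)<5n/4$; adding edges does not. Also, $D>a$ does not give an edge from $Q$ to $V(H)\setminus Q$, since a vertex of maximum degree may lie outside $Q$. You need the paper's separate step: if $Q$ is isolated, apply Lemma~\ref{lem:alpha3}(i) to $H-Q$.
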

\begin{proof}
Suppose a counterexample exists.  Among all counterexamples of the same order, choose $H$ with as few edges as possible.
Deleting an edge cannot increase $M_4$, so deleting any edge of $H$ increases the independence number beyond four. Thus $\ov H$ is maximal $K_5$-free. Let $n=|H|$, $D=\Delta(H)+1$, and $a=\omega(H)$. The assumption $M_4(H)<5n/4=\Lambda$, together with \eqref{eq:elementary-M} and \eqref{eq:private-count}, gives
\begin{equation}\label{eq:a4-bounds}
 D<\frac{5n}{16},\qquad a<\frac n4,\qquad
 a\ge\frac n2-D>\frac{3n}{16}.
\end{equation}
A maximum independent set in $H$ dominates $H$; hence $n\le\alpha(H)D$, and $\alpha(H)=4$ since $D\leq 5n/16$.

If $r_1,r_2$ are nonadjacent vertices of $H$, the vertices outside $N_H[r_1]\cup N_H[r_2]$ induce a graph with independence number at most two since $\alpha(H)=4$.  Lemma~\ref{lem:alpha2} and $M_4(H)<5n/4$ show that fewer than $n/2$ vertices lie outside $N_H[r_1]\cup N_H[r_2]$. Consequently, $|N_H[r_1]\cap N_H[r_2]|=|N_{H}[r_1]|+|N_{H}[r_2]|-|N_H[r_1]\cup N_H[r_2]|<2D-n/2$. For a maximum clique $Q$ in $H$ and $x\notin Q$, maximality of $Q$ provides a vertex $y\in Q$ nonadjacent to $x$. Since $N_H(x)\cap Q\subseteq N_H[x]\cap N_H[y]$, writing $L=2D-n/2$ gives
\begin{equation}\label{eq:a4-external}
 |N_H(x)\cap Q|<L<a.
\end{equation}
Here $L<a$ follows from $a\ge n/2-D$ and $D<5n/16<n/3$.

\begin{claim}\label{cl:a4-forbidden}
If $\alpha(H[U])\leq 2$, then $|U|<n-2D+a/2$.
\end{claim}
\begin{proof}[Proof of claim~\ref{cl:a4-forbidden}]
Suppose such a set $U$ has $|U|\ge n-2D+a/2$. Choose a vertex set $S$ of largest cardinality such that $\alpha(H[S])\leq 2$, and let $s=|S|$. Then $\ov{H[S]}$ is triangle-free and $s\geq n-2D+a/2$. If $\ov{H[S]}$ is bipartite, then $a\ge s/2\ge(n-2D+a/2)/2$, which gives $a\ge2(n-2D)/3>n/4$, contrary to \eqref{eq:a4-bounds}. Thus $\ov{H[S]}$ is nonbipartite. Lemma~\ref{lem:13over5} and $M_4(H)<5n/4$ give $s<25n/52$. On the other hand, $s\ge n-2D+a/2>3n/8+a/2$ by $D<5n/16$; hence $a<11n/52$.

Choose a shortest odd cycle $C$ in $\ov{H[S]}$. By Lemma~\ref{lem:triangle-degree}(i), every vertex of $S$ is adjacent in $\ov H$ to at most two vertices of $C$.  Therefore, $\sum_{bc\in E(C)}|S\cap(N_{\ov H}(b)\cup N_{\ov H}(c))|\le4s$. As \(|C|\ge 5\), some edge $bc$ of $C$ satisfies that at least \(s/5\) vertices in $S$ have no neighbor among \(\{b,c\}\) in \(\overline H\). Then $N_{\overline H}(b)\cap N_{\overline H}(c)$ is triangle-free in $\overline H$ because $\overline H$ is $K_5$-free, and it is empty in $\overline{H[S]}$ since $\overline{H[S]}$ is triangle-free. Since $\delta(\ov H)=n-D$, we have $|N_{\overline H}(b)\cap N_{\overline H}(c)|\geq 2(n-D)-(n-s/5)=n-2D+s/5$. Choose $T\subseteq V(H)\setminus S$ of largest cardinality such that $\ov{H[T]}$ is triangle-free, and let $t=|T|$. Then $t\ge n-2D+s/5$.

If $S\cup T=V(H)$, Lemma~\ref{lem:alpha2} applied to $H[S]$ or $H[T]$ gives $M_4(H)\ge5n/4$, a contradiction. Otherwise choose $v\notin S\cup T$. Adding $v$ to $S$ or to $T$ creates a triangle in $\ov H$ by the choice of $S$ and $T$. Hence there are edges $b_1b_2$ in $N_{\ov H}(v)\cap S$ and $c_1c_2$ in $N_{\ov H}(v)\cap T$. For each $i\in\{1,2\}$, the intersection $N_{\overline H}(b_i)\cap S$ is independent in $\overline H$; as $\overline{H[S]}$ is triangle-free, $|N_{\overline H}(b_i)\cap S|\le \alpha(\overline H)=a$. The intersection $N_{\overline H}(v)\cap N_{\overline H}(c_1)\cap N_{\overline H}(c_2)\cap S$ is independent in $\overline H$, since $\overline H$ is $K_5$-free. At most $a$ vertices of $S$ are therefore adjacent to all three, and every other vertex is adjacent to at most two. Thus the sum of the numbers of neighbors in $S$ of $v,c_1,c_2$ is at most $2s+a$. The sum of the numbers of neighbors in $S$ of the five vertices $v,b_1,b_2,c_1,c_2$ is at most $2s+3a$. Similarly, the corresponding sum in $T$ is at most $2t+3a$. Thus $5(n-D)\leq d_{\ov H}(v)+d_{\ov H}(b_1)+d_{\ov H}(b_2)+d_{\ov H}(c_1)+d_{\ov H}(c_2)\leq (2s+3a)+(2t+3a)+5(n-s-t)=5n-3(s+t)+6a$. On the other hand, $s+t\geq n-2D+6s/5\geq 11(n-2D)/5+3a/5$ by $s\geq n-2D+a/2$ and $t\geq n-2D+s/5$.

Combining the previous two inequalities, we obtain $33n-91D\leq 21a$, and thus $a\ge(33n-91D)/21>73n/336>11n/52$,
contrary to $a<11n/52$.  This proves the claim.
\end{proof}

Let $Q$ be a maximum clique in $H$. If no edge of $H$ joins $Q$ to $V(H)\setminus Q$, then $\alpha(H-Q)\le3$ since $\alpha(H)=4$. Lemma~\ref{lem:alpha3}(i) gives $M_4(H)\ge5(n-a)/3>5n/4$ by $a<n/4$, a contradiction. Thus such an edge exists. Lemma~\ref{lem:critical-edge} therefore supplies a maximum independent set $I$ in $H$ disjoint from $Q$, with at least one bad index. Use the notation of Section~\ref{sec:private}. In particular, \eqref{eq:private-count} gives $\sum_i e_i\le8D-2n-h<n/2$. If there were three bad indices, \eqref{eq:bad-count} and the bound $e_i\ge D-a$ for the good index in \eqref{eq:private-count} would give $\sum_i e_i\ge2a+(D-a)=D+a\ge n/2$. Four bad indices would give $\sum_i e_i\ge3a>9n/16$.
Both conclusions contradict $\sum_i e_i<n/2$. There are therefore one or two bad indices.

\smallskip\noindent{\bf{Case 1:}} There is exactly one bad index in $I$.

Relabel $I$ so that index $1$ is bad. Let $E=e_2+e_3+e_4$ and $s_1=|N_H(v_1)\cap Q|$. By \eqref{eq:a4-external}, $q_1\le s_1<L$ since $P_1\cap Q\subseteq N_H(v_1)$. Every pair of good values has sum less than $D$. Indeed, a pair of good values with sum at least $D$, together with the third good value $e_i\ge D-a$ from \eqref{eq:private-count}, would give $E\ge2D-a$.
By \eqref{eq:bad-weight}, $e_1>D-5n/16+a-3q_1/4>-D/2+n/16+a$ since $q_1<L=2D-n/2$. Thus \eqref{eq:private-count} would give
$8D-2n\ge e_1+E>3D/2+n/16$, that is, $D>33n/104>5n/16$, contrary to \eqref{eq:a4-bounds}.

A vertex of $Q$ not adjacent to $v_1$ has neighbors only at good indices of $I$.  Exactly one neighbor would make that index bad; exactly two are excluded by Lemma~\ref{lem:two-exchange} and the preceding pairwise bound.  Such a vertex therefore has three neighbors in $I$.  Hence $h\ge a-s_1>a-L$. Also \eqref{eq:private-count}, with $e_i\ge D-a$, gives $h\le4D+4a-2n$.  Combining these bounds yields $a>5n/6-2D$. By \eqref{eq:private-count} and \eqref{eq:bad-weight}, $E\le8D-2n-h-e_1<7D-27n/16-a+3q_1/4-h\le7D-27n/16-2a+7s_1/4<21D/2-41n/16-2a$. It follows that $E-(2D+a-n/2)<17D/2-33n/16-3a
<29D/2-73n/16<-n/32$. Lemma~\ref{lem:three-exchange}, with $\Lambda=5n/4$, implies $Q\subseteq N_H(v_1)$, contrary to \eqref{eq:a4-external}.

\smallskip\noindent{\bf{Case 2:}} There are exactly two bad indices in $I$.

Relabel the bad indices as $1,2$ and let $E=e_3+e_4$. Equations~\eqref{eq:private-count}, \eqref{eq:bad-weight} and \eqref{eq:a4-bounds}, together
with $q_1+q_2\le a$, give
$E<6D-11n/8-2a+3(q_1+q_2)/4
\le6D-11n/8-5a/4<D$. A vertex of $Q$ avoiding $v_1,v_2$ would have one or two neighbors at good indices, contrary to the definition of a good index or to Lemma~\ref{lem:two-exchange}. Thus $Q\subseteq N_H(\{v_1,v_2\})$. Choose $v_2$, after relabelling if necessary, with at least $a/2$ neighbors in $Q$.  By \eqref{eq:a4-external}, there is a vertex $z\in Q$ nonadjacent to $v_2$. Let $W=V(H)\setminus N_H[\{v_2,z\}]$. Then $\alpha(H[W])\le2$. Moreover $Q\subseteq N_H[z]$, so $|N_H[v_2]\cap N_H[z]|\geq |N_H(v_2)\cap Q|\ge a/2$ and $|W|\ge n-2D+a/2$, contrary to Claim~\ref{cl:a4-forbidden}.
\end{proof}

\begin{theorem}\label{thm:alpha5}
If a graph $H$ satisfies $\alpha(H)\le5$, then $M_4(H)\ge|H|$.
\end{theorem}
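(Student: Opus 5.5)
We will follow the template of the proof of Theorem~\ref{thm:alpha4}. Suppose a counterexample exists. Among counterexamples of the same order, choose $H$ with the fewest edges, so that $\ov H$ is maximal $K_6$-free. Put $n=|H|$, $D=\Delta(H)+1$, $a=\omega(H)$ and $\Lambda=n$. Since $M_4(H)<n$, \eqref{eq:elementary-M} gives $D<n/4$ and $a<n/5$. Domination by a maximum independent set gives $n\le\alpha(H)D$, so $\alpha(H)=5$. Then \eqref{eq:private-count} gives $a\ge 2n/5-D>3n/20$.

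The first step is the analogue of \eqref{eq:a4-external}. For nonadjacent $r_1,r_2$, the set outside $N_H[r_1]\cup N_H[r_2]$ induces a graph with $\alpha\le3$. Lemma~\ref{lem:alpha3}(i) then bounds its size by $3n/5$. Hence $|N_H[r_1]\cap N_H[r_2]|<2D-2n/5=:L$, and every $x\notin Q$ has fewer than $L$ neighbours in a maximum clique $Q$. We check $L<a$ from the bounds above. Next we need an analogue of Claim~\ref{cl:a4-forbidden}: any $U$ with $\alpha(H[U])\le3$ satisfies $|U|<n-2D+a/2$.

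To prove this claim, take $S$ maximum with $\alpha(H[S])\le 3$. If $|S|/D$ is large, Lemma~\ref{lem:alpha3}(ii) yields $\omega\ge |S|/3$; together with the lower bound on $|S|$ this contradicts $a<n/5$. Otherwise we mimic the $T$-construction of the $\alpha\le4$ claim. We pass to the complement, which is $K_4$-free on $S$, and use Lemma~\ref{lem:Lyle}(ii) to force a large independent set in $\ov{H[S]}$, that is, a clique in $H$. We then argue via degree counting against $\delta(\ov H)=n-D$ that the leftover vertices produce a contradiction.

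Then we run the private-clique analysis. If no edge leaves $Q$, then $\alpha(H-Q)\le4$, and Theorem~\ref{thm:alpha4} gives $M_4(H)\ge5(n-a)/4>n$, a contradiction. Otherwise Lemma~\ref{lem:critical-edge} supplies $I$ disjoint from $Q$ with a bad index. By \eqref{eq:private-count}, $\sum e_i\le 10D-2n-h<n/2$. Using \eqref{eq:bad-count} and $e_i\ge D-a$ for good indices, we bound the number of bad indices by $|B|\le2$: for example, three bad indices give $\sum e_i\ge 2a+2(D-a)=2D$, and this must be compared with $10D-2n$, which may require also using \eqref{eq:bad-weight}.

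In each case ($|B|=1$ or $2$), every vertex of $Q$ outside $N_H(\{v_i:i\in B\})$ must see at least three good indices. One neighbour would make that index bad, and two are excluded by Lemma~\ref{lem:two-exchange} once pairwise good sums are shown to be $<D$. This gives a lower bound on $h$. Feeding it back into \eqref{eq:private-count} and \eqref{eq:bad-weight} bounds the sums of triples of good values. Lemma~\ref{lem:three-exchange}(i) with $\Lambda=n$ then shows that the bad vertices dominate $Q$, which contradicts the external bound $L<a$ when $|B|=1$. When $|B|=2$, we pick $v_2$ with at least $a/2$ neighbours in $Q$ and a non-neighbour $z\in Q$. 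The set $W=V(H)\setminus N_H[\{v_2,z\}]$ has $\alpha\le3$ and $|W|\ge n-2D+a/2$, which contradicts the claim.

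The main obstacle is the numerical bookkeeping. With five indices there are more good values, so ruling out two good values with sum at least $D$ and three-exchange failures needs sharper inequalities. The claim for $\alpha\le3$ sets, where Lyle's bounds must be made to fit, is likely the hardest step. If the constants fail, the first fallback is Lemma~\ref{lem:three-exchange}(ii) in place of (i).
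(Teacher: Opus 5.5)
Your outline matches the paper's: a minimal counterexample, the claim that $\alpha(H[U])\le3$ forces $|U|<n-2D+a/2$, private cliques with one or two bad indices, and then Lemma~\ref{lem:three-exchange}. However, several steps do not work as planned.

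In the claim, Lemma~\ref{lem:Lyle}(ii) beats $a$ only when $a\le n/6$. The regime $a>n/6$, $|S|/D\le5/2$ is nonempty (e.g.\ $D\approx0.245n$, $a\approx0.18n$, $|S|\approx0.6n$), and there neither Lemma~\ref{lem:alpha3}(ii) nor Lyle's bound (ii) settles it. Rather than mimicking the $T$-construction, note that in this regime $|S|>7n/12>7D/3$, so $\delta>4|S|/7$ and Lemma~\ref{lem:Lyle}(i) applies directly. The non-join alternative gives $\alpha\ge2|S|-4D>a$. In the join alternative, Lemma~\ref{lem:alpha2} together with $M_4(H)<n$ forces $|G_2|<2n/5$, which is incompatible with $|S|>n/2+a/2$.

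With one bad index there are four good indices, so Lemma~\ref{lem:three-exchange} only gives $Q\subseteq N_H(\{v_1,v_j\})$ for some good $j$, not domination by $v_1$ alone. Your bound $L=2D-2n/5$ cannot rule this out, because $2L=4D-4n/5$ may exceed $a$ (e.g.\ at $D=0.249n$, $a=0.16n$). You must turn the claim into $|N_H(x)\cap Q|<a/2$ for every $x\notin Q$, by applying it to $V(H)\setminus N_H[\{x,y\}]$ with $y\in Q$ a non-neighbour of $x$. This bound is what gives $h>a/2$ and closes the case. Also, a pair of good values with sum at least $D$ cannot be excluded here (it only forces $a>2n/11$), so that subcase needs a separate bound on $E$.

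The essential missing idea is in the two-bad-index case. Put $s=|N_H(v_1)\cap Q|+|N_H(v_2)\cap Q|<a$. Feeding $h\ge a-s$ and $q_1+q_2\le s$ into \eqref{eq:private-count} and \eqref{eq:bad-weight} yields only $E<8D-3n/2-3a+7s/4$. This implies the hypothesis of neither part of Lemma~\ref{lem:three-exchange}. For $D=0.249n$, $a=0.16n$, $s=0.159n$ all your inequalities hold, yet $E$ may lie anywhere in $[3(D-a),0.29n)=[0.267n,0.29n)$. That is above both $2D+a-2n/5=0.258n$ and $\min\{a+D/3,2D-a\}\approx0.243n$. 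So $Q\subseteq N_H(\{v_1,v_2\})$ is never reached, and your final step with $W=V(H)\setminus N_H[\{v_2,z\}]$ cannot begin. The fallback to part (ii) does not help either.

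The paper first pushes $a$ up. Since $s<a$, some $z\in Q$ is adjacent to neither $v_1$ nor $v_2$. Then $W=V(H)\setminus N_H[\{v_1,v_2,z\}]$ satisfies $\alpha(H[W])\le2$, and, as $Q\subseteq N_H[z]$, also $|W|\ge n-3D+s$. Comparing $E\ge3(D-a)$ with the upper bound gives $s>(6n-20D)/7>n/7$, so $|W|>11n/28$. Lemma~\ref{lem:13over5} then forces $\overline{H[W]}$ to be bipartite (otherwise $M_4(H)>n$), whence $a\ge|W|/2>11n/56$. This is exactly what yields $E<2D+a-2n/5$. After that, Lemma~\ref{lem:three-exchange}(i) gives $Q\subseteq N_H(\{v_1,v_2\})$, contradicting the $a/2$ bound.
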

\begin{proof}
Suppose a counterexample exists.  Among all counterexamples of the same order, choose $H$ with as few edges as possible. As in Theorem~\ref{thm:alpha4}, $\ov H$ is maximal $K_6$-free. Let $n=|H|$, $D=\Delta(H)+1$, and $a=\omega(H)$. Inequalities~\eqref{eq:elementary-M} and \eqref{eq:private-count} give
\begin{equation}\label{eq:a5-bounds}
 D<\frac n4,\qquad a<\frac n5,\qquad
 a\ge\frac{2n}{5}-D>\frac{3n}{20},\qquad \alpha(H)=5.
\end{equation}
The equality follows from $n\le\alpha(H)D$. These bounds also give $a>3D/5$.

\begin{claim}\label{cl:a5-forbidden}
If $\alpha(H[U])\le3$, then $|U|<n-2D+a/2$.
\end{claim}
\begin{proof}[Proof of claim~\ref{cl:a5-forbidden}]
Suppose $\alpha(H[U])\le3$ and $|U|\ge n-2D+a/2$. Extend $\ov{H[U]}$ to a maximal $K_4$-free graph $G$, and let $d=\delta(G)$. Then $d\ge|U|-D$ and $\alpha(G)\le\alpha(\ov{H[U]})\le a$. Lemma~\ref{lem:Lyle}(ii) gives $\alpha(G)\ge\min\{d/2,\,9d/4-|U|,\,(5d-2|U|)/3,\,5(2d-|U|)/2\}$. Since $D<n/4$ and $|U|\ge n-2D+a/2$, every expression in this minimum is greater than $a$ when $a\le n/6$. Thus $a>n/6$, and consequently $|U|>n/2+a/2>7n/12>7D/3$ and $d>4|U|/7$.
In the non-join alternative of Lemma~\ref{lem:Lyle}(i), $a\ge\alpha(G)\ge4d-2|U|\ge2|U|-4D\ge2n-8D+a>a$, a contradiction. In the join alternative of Lemma~\ref{lem:Lyle}(i), we have $G=G_1\vee G_2$ such that $G_1$ has no edges and $G_2$ is triangle-free. Then $|G_1|\le\alpha(G)\le a$ and $G_2$ is triangle-free. Since $\ov{G_2}$ is a subgraph of $H$, Lemma~\ref{lem:alpha2} and $M_4(H)<n$ imply $|G_2|<2n/5$. Hence $|U|<a+2n/5$, whereas $|U|>n/2+a/2$. These bounds give $a>n/5$, contrary to \eqref{eq:a5-bounds}.
\end{proof}

For nonadjacent vertices $r_1,r_2$ of $H$, the vertices outside
$N_H[r_1]\cup N_H[r_2]$ induce a graph with independence number at most three since $\alpha(H)=5$. By inclusion-exclusion and $|N_H[r_i]|\le D$ for $i=1,2$, at least $n-2D+|N_H[r_1]\cap N_H[r_2]|$ vertices lie outside $N_H[r_1]\cup N_H[r_2]$. Claim~\ref{cl:a5-forbidden} shows that this number is less than $n-2D+a/2$. Consequently, $|N_H[r_1]\cap N_H[r_2]|<a/2$. For a maximum clique $Q$ in $H$ and $x\notin Q$, maximality of $Q$ provides $y\in Q$ nonadjacent to $x$. Since $N_H(x)\cap Q\subseteq N_H[x]\cap N_H[y]$,
\begin{equation}\label{eq:a5-half}
 |N_H(x)\cap Q|<\frac a2.
\end{equation}
In particular, two vertices outside $Q$ cannot dominate $Q$.

If no edge of $H$ joins $Q$ to $V(H)\setminus Q$, then $\alpha(H-Q)\le4$ and Theorem~\ref{thm:alpha4} gives $M_4(H)\ge5(n-a)/4>n$, contrary to the counterexample assumption. Otherwise Lemma~\ref{lem:critical-edge} supplies a maximum independent set $I$ in $H$ disjoint from $Q$, with at least one bad index. Inequality~\eqref{eq:private-count} gives
$\sum_i e_i\le10D-2n-h<2D$. By \eqref{eq:private-count} and \eqref{eq:bad-count}, three bad indices give $\sum_i e_i\ge2a+2(D-a)=2D$; four give $\sum_i e_i\ge D+2a>2D$; and five give $\sum_i e_i\ge4a>2D$. Each case contradicts $\sum_i e_i<2D$. Hence there are one or two bad indices. For each bad index $i$, \eqref{eq:a5-half} gives $q_i<a/2$, so \eqref{eq:bad-weight} becomes
\begin{equation}\label{eq:a5-bad-deficit}
 e_i>D-\frac n4+a-\frac34q_i>D-\frac n4+\frac{5a}{8}.
\end{equation}

\smallskip\noindent{\bf{Case 1:}} There is exactly one bad index in $I$.

Relabel the bad index as $1$ and the good indices so that $e_2\le e_3\le e_4\le e_5$. Let $\sigma=e_2+e_3+e_4+e_5$ and $E=e_2+e_3+e_4$. Inequalities~\eqref{eq:private-count} and \eqref{eq:a5-bad-deficit} give
\begin{equation}\label{eq:a5-one-total}
 \sigma<9D-\frac{7n}{4}-\frac{5a}{8}-h.
\end{equation}
We show that $E<2D+a-2n/5$. Suppose first that $e_i+e_j<D$ for every pair of distinct good indices. A vertex $z\in Q$ nonadjacent to $v_1$ cannot have exactly one neighbor in $I$, since the corresponding index would be bad. If $z$ had exactly two neighbors $v_i,v_j$ in $I$, then both indices would be good and Lemma~\ref{lem:two-exchange} would imply $e_i+e_j\ge D$.
Thus every such $z$ has at least three neighbors in $I$. By \eqref{eq:a5-half}, more than $a/2$ vertices of $Q$ avoid $v_1$. Each such vertex has at least three neighbors in $I$ and contributes at least one to $h$, so $h>a/2$. It follows that $E\le3\sigma/4<27D/4-21n/16-27a/32$. Hence by \eqref{eq:a5-bounds}, we have
\[
 E-\left(2D+a-\frac{2n}{5}\right)
 <\frac{19D}{4}-\frac{73n}{80}-\frac{59a}{32}
 <\frac{11n}{40}-\frac{59a}{32}<-\frac n{640}.
\]
If some pair of good values has sum at least $D$, then $e_4+e_5\ge D$, $e_5\ge D/2$, and $\sigma\ge3D-2a$ by \eqref{eq:private-count}. Combining this lower bound with \eqref{eq:a5-one-total} and $D<n/4$ gives $11a/8>7n/4-6D>n/4$ by $h\geq 0$, and hence $a>2n/11$. If $E\ge2D+a-2n/5$, then $\sigma=E+e_5\ge5D/2+a-2n/5$. A second comparison with \eqref{eq:a5-one-total} gives $13a/8<13D/2-27n/20<11n/40$, so $a<11n/65<2n/11$, a contradiction. Thus $E<2D+a-2n/5$ in both cases. Lemma~\ref{lem:three-exchange}, applied to indices $2,3,4$, gives $Q\subseteq N_H(\{v_1,v_5\})$, contrary to \eqref{eq:a5-half}.

\smallskip\noindent{\bf{Case 2:}} There are exactly two bad indices in $I$.

Relabel the bad indices as $1,2$, let $E=e_3+e_4+e_5$, and define $s=|N_H(v_1)\cap Q|+|N_H(v_2)\cap Q|<a$. Then $q_1+q_2\leq s<a$. By \eqref{eq:private-count} and \eqref{eq:bad-weight}, we have
\begin{equation}\label{eq:a5-two-total}
 E<8D-\frac{3n}{2}-2a+\frac34(q_1+q_2)-h
 <8D-\frac{3n}{2}-\frac54a<2D-a.
\end{equation}
Since every good value is at least $D-a$ in \eqref{eq:private-count}, every pair of good values has sum at most $E-(D-a)<D$.
Clearly, at least $a-s$ vertices of $Q$ are nonadjacent to both $v_1$ and $v_2$. Every such vertex has exactly three neighbors in $I$, all at good indices: one neighbor is excluded by the definition of a good index, and two are excluded by Lemma~\ref{lem:two-exchange}. Thus $h\ge a-s$; also $q_1+q_2\le s$. Using $E\ge3(D-a)$ in \eqref{eq:a5-two-total} gives $3(D-a)\le E<8D-3n/2-3a+7s/4$, and hence $s>(6n-20D)/7>n/7$ since $D<n/4$.

Since $s<a$, choose $z\in Q$ adjacent to neither $v_1$ nor $v_2$, and let $W=V(H)\setminus N_H[\{v_1,v_2,z\}]$. The set $\{v_1,v_2,z\}$ is independent in $H$, so $\alpha(H[W])\le2$. Since $Q\subseteq N_H[z]$, we have $N_H[\{v_1,v_2,z\}]\subseteq N_H[z]\cup(N_H[v_1]\setminus Q)\cup(N_H[v_2]\setminus Q)$. Therefore
\[
 |N_H[\{v_1,v_2,z\}]|
 \le |N_H[z]|+|N_H[v_1]\setminus Q|+|N_H[v_2]\setminus Q|
 \le3D-s.
\]
Consequently, $|W|\ge n-3D+s>11n/28$. If $\overline{H[W]}$ is non-bipartite, Lemma~\ref{lem:13over5} gives $M_4(H)\ge13|W|/5>143n/140>n$ since $\alpha(H[W])\le2$, a contradiction. Thus $\ov{H[W]}$ is bipartite and $a\ge|W|/2>11n/56$. Returning to \eqref{eq:a5-two-total}, we obtain
\[
 E-\left(2D+a-\frac{2n}{5}\right)
 <6D-\frac{11n}{10}-\frac94a
 <\frac{2n}{5}-\frac94a<-\frac{47n}{1120}.
\]
Lemma~\ref{lem:three-exchange}, applied to indices $3,4,5$, now gives $Q\subseteq N_H(\{v_1,v_2\})$, again contrary to \eqref{eq:a5-half}.
\end{proof}

\begin{theorem}\label{thm:weighted-general}
For every integer $q\ge5$ and every graph $H$ with $\alpha(H)\le q$, we have $M_{q-1}(H)\ge|H|$.
\end{theorem}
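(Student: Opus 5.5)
We argue by induction on $q$, with Theorem~\ref{thm:alpha5} as the base case $q=5$. Note that Theorem~\ref{thm:alpha5} gives $M_4(H)\ge|H|$, which is exactly the claim for $q=5$. Now let $q\ge6$ and suppose the statement holds for $q-1$. Take a graph $H$ of order $n$ with $\alpha(H)\le q$, and suppose for contradiction that $M_{q-1}(H)<n$. If $\alpha(H)\le q-1$, the induction hypothesis gives $M_{q-2}(H)\ge n$. Since $M_{q-1}\ge M_{q-2}$ trivially (increase the weight on $|X|$), we are done. So we may assume $\alpha(H)=q$. Fix a vertex $v$ and set $H'=H-N_H[v]$. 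Any independent set of $H'$ extends by $v$, so $\alpha(H')\le q-1$.

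The key step is to combine a cross-complete pair in $H'$ with the pair $(N_H[v],\{v\})$. By induction, $H'$ has a cross-complete pair $(X',Y')$ with $(q-2)|X'|+|Y'|\ge n-|N_H[v]|\ge n-D$, where $D=\Delta(H)+1$. By \eqref{eq:elementary-M} we have $M_{q-1}(H)\ge(q-1)D+1$, so the assumption forces $D<n/(q-1)$. Upgrading the weight from $q-2$ to $q-1$ via \eqref{eq:scaling} gives
\[
M_{q-1}(H)\ge M_{q-1}(H')\ge\frac{q}{q-1}\,(n-D)>\frac{q}{q-1}\Bigl(n-\frac{n}{q-1}\Bigr)=\frac{q(q-2)}{(q-1)^2}\,n .
\]
This is just short of $n$, so the pure induction loses a factor $1-1/(q-1)^2$. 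To recover it, I would use the private-clique machinery instead of a single vertex $v$. By \eqref{eq:private-count}, $a+D\ge 2n/q$, and \eqref{eq:elementary-M} gives $qa\le M_{q-1}(H)<n$, hence $a<n/q$. Together with $D<n/(q-1)$ these bounds are tight-ish: they force $D$ and $a$ both to be close to $n/q$, namely $a>n/q-n/(q(q-1))$ roughly.

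In this near-extremal regime I would rerun the argument with a better choice. Remove the closed neighborhood of a vertex $v$ that has few neighbors. Alternatively, remove a maximum clique $Q$: if no edge joins $Q$ to the rest, then $\alpha(H-Q)\le q-1$ and induction gives
\[
M_{q-1}(H)\ge\frac{q}{q-1}(n-a)>\frac{q}{q-1}\cdot\frac{(q-1)n}{q}=n .
\]
Otherwise we may, as in the earlier theorems, pass to a minimal-edge counterexample so that $\overline H$ is maximal $K_{q+1}$-free. Lemma~\ref{lem:critical-edge} then gives a maximum independent set $I$ disjoint from $Q$ with at least one bad index. The counting \eqref{eq:private-count}, \eqref{eq:bad-count}, together with the analogue of \eqref{eq:bad-weight} using weight $q-1$, shows that the number of bad indices is small. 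We then want two vertices $v_i,v_j$ of $I$ with $|N[v_i]\cup N[v_j]|$ small relative to $2D$, or else a vertex whose removal kills $Q$ efficiently. Deleting $N_H[\{v_i,v_j\}]$ leaves a graph of independence number $\le q-2$. Applying induction twice with weights $q-3$ and scaling should then yield $M_{q-1}(H)\ge\frac{q}{q-2}(n-2D+\text{overlap})$, which exceeds $n$ once the overlap is of order $a/2$. This mirrors the role of Claims~\ref{cl:a4-forbidden} and \ref{cl:a5-forbidden}.

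The main obstacle is this last quantitative step. We must show that in the near-extremal regime ($a\approx D\approx n/q$) some removal loses noticeably less than the naive bound, via overlap of neighborhoods in $Q$ or via Lemma~\ref{lem:three-exchange} forcing $Q$ to be dominated by few vertices of $I$. I would first try the Case~1/Case~2 split on the number of bad indices, exactly as in Theorem~\ref{thm:alpha5}, using the stronger slack available for large $q$. The weights $q-1$ grow, so the inequalities there should only become easier.
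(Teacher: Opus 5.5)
Your proposal has a genuine gap: the hard case is planned but not proved, and the plan as stated would not close. The setup is sound. That covers the induction with Theorem~\ref{thm:alpha5} as base, the reduction to $\alpha(H)=q$, the bounds $D<n/(q-1)$, $a<n/q$, $a\ge 2n/q-D$, and the computation $\frac{q}{q-1}(n-a)>n$ once $\alpha(H-Q)\le q-1$. That computation is exactly the paper's closing step. The paper applies it under the weaker hypothesis that every maximum independent set meets $Q$, so it needs neither the minimal-edge reduction nor Lemma~\ref{lem:critical-edge}.

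All the difficulty sits in the case where a maximum independent set $I$ is disjoint from a maximum clique $Q$, and there you only say ``should then yield'' and ``should only become easier''. Deleting $N_H[\{r_1,r_2\}]$ for a nonadjacent pair and invoking the case $q-2$ gives a contradiction only if $|N_H[r_1]\cap N_H[r_2]|\ge 2D-2n/q$. In the extremal regime $D\approx n/(q-1)$, $a\approx 2n/q-D$, this threshold is about $2a/(q-2)$. Nothing forces two vertices of $I$ to share that many neighbors. The natural pair $v\in I$, $z\in Q\setminus N_H(v)$ gives overlap only $|N_H(v)\cap Q|$. If $t$ vertices of $I$ are needed to dominate $Q$, that overlap is only guaranteed to be about $a/t$, which is below $2a/(q-2)$ unless $t\le (q-2)/2$.

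The missing idea is to delete the closed neighborhood of a $t$-element independent set whose neighborhoods overlap by order $a$, not $a/t$. The paper proceeds as follows.
\begin{itemize}
\item It bounds the sum of the three smallest good values: $E<D\min\{\rho+\frac{1}{3},\,2-\rho\}$ with $\rho=a/D$. This uses $\sum_i e_i<\min\{2D,2q(D-a)\}$, $e_i\ge D-a$, and at most three bad indices.
\item Lemma~\ref{lem:three-exchange}(ii) then shows that $Q$ is dominated by a minimal $T\subseteq I$ with $2\le t=|T|\le q-3$.
\item It swaps the vertex $v\in T$ with fewest neighbors in $Q$ for a vertex $w\in Q$ whose only neighbor in $T$ is $v$. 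Then $T'=(T\setminus\{v\})\cup\{w\}$ is independent and $N_H[w]\supseteq Q$, so $|N_H[T']|\le tD-\frac{t-1}{t}a$.
\item Hence $U=V(H)\setminus N_H[T']$ has $\alpha(H[U])\le q-t$ and $|U|\ge n-tD+\frac{t-1}{t}a$. For $q-t\ge4$ this exceeds $\frac{q-t}{q}n$, contradicting the induction hypothesis (or Theorem~\ref{thm:alpha4}) after scaling.
\end{itemize}

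The endpoint $q-t=3$ is not ``easier''. In the worst case, scaling Lemma~\ref{lem:alpha3}(i) falls short of $3n/q$ by $n/(q(q-1)(q-3))$. The paper therefore needs Lemma~\ref{lem:alpha3}(ii) there to produce a clique of size $|U|/3>a$ when $q\ge7$. For $q=6$, where $|U|/D>5/2$ is unavailable, it needs a separate argument via Lemma~\ref{lem:Lyle} and Lemma~\ref{lem:triangle-degree}.
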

\begin{proof}
The case $q=5$ is Theorem~\ref{thm:alpha5}. Suppose $q\ge6$ is the smallest parameter admitting a counterexample $H$, and let $n=|H|$, $D=\Delta(H)+1$, and $a=\omega(H)$. The counterexample assumption and \eqref{eq:elementary-M} give \begin{equation}\label{eq:general-counterexample}
 M_{q-1}(H)<n,\qquad n>(q-1)D,\qquad n>qa,
 \qquad\alpha(H)=q.
\end{equation}
For the equality, a maximum independent set in $H$ dominates $H$, so $n\le\alpha(H)D$; since $n>(q-1)D$, we have $\alpha(H)=q$. We first show that every maximum clique in $H$ meets every maximum independent set in $H$.

Suppose a maximum clique $Q$ in $H$ is disjoint from a maximum independent set $I=\{v_1,\ldots,v_q\}$ in $H$. Use the private-clique notation of Section~\ref{sec:private}, and put $\rho=a/D$. Since $a+D\geq 2n/q$ in \eqref{eq:private-count} and $n>(q-1)D$ in \eqref{eq:general-counterexample}, we have $a>(1-2/q)D$.  Furthermore, the first inequality of \eqref{eq:private-count} is less than $2D$ by $n>(q-1)D$, and less than $2q(D-a)$ by $n>qa$. Consequently,
\begin{equation}\label{eq:general-deficits}
 1-\frac2q<\rho<1,\qquad  \sum_i e_i<\min\{2D,\,2q(1-\rho)D\},\qquad  e_i\ge(1-\rho)D.
\end{equation}
The upper bound $\rho<1$ follows from $a<n/q\le D$. In particular, $\rho>2/3$. By \eqref{eq:bad-count}, the number $b$ of bad indices is at most three, since four bad indices would give $\sum_i e_i\ge3a>2D$. Hence there are at least three good indices. Choose the three good indices with the smallest values, denote their index set by $J$, and put $E=\sum_{i\in J}e_i$.

\begin{claim}\label{cl:three-smallest}
$E<D\min\{\rho+1/3,\,2-\rho\}$.
\end{claim}
\begin{proof}[Proof of claim~\ref{cl:three-smallest}]
The sum $E$ of the three smallest good values is at most $3/(q-b)$ times the sum of all $q-b$ good values. Indeed, each of the other $q-b-3$ good values is at least the largest of the selected three and hence at least $E/3$. Thus the sum of all good values is at least $(q-b)E/3$. For $b=0$, \eqref{eq:general-deficits} gives $E<6D/q\le D$. For $b=1$ and $q\ge7$, subtract the bad value, which is at least $(1-\rho)D$, from the bound $\sum_i e_i<2D$. This gives $E<3(1+\rho)D/(q-1)\le(1+\rho)D/2$. For $b=2$, \eqref{eq:bad-count} gives a sum of bad values at least $a=\rho D$, so $E<3(2-\rho)D/(q-2)\le3(2-\rho)D/4$. For $b=3$, the same argument gives $E<3(2-2\rho)D/(q-3)\le(2-2\rho)D$. Because $2/3<\rho<1$, each of these upper bounds is less than $D\min\{\rho+1/3,2-\rho\}$.

The remaining case is $q=6$ and $b=1$. Subtracting the bad value at least $(1-\rho)D$ from both bounds for $\sum_i e_i$ in
\eqref{eq:general-deficits} gives $E<\tfrac35\min\{1+\rho,11(1-\rho)\}D$. The first expression is less than $(\rho+1/3)D$ since $\rho>2/3$. For comparison with $(2-\rho)D$, if $\rho\le5/6$, then $2-\rho-3(1+\rho)/5>0$. If $\rho\ge5/6$, then
$2-\rho-33(1-\rho)/5>0$. Both comparisons are strict.
\end{proof}

Claim~\ref{cl:three-smallest} and Lemma~\ref{lem:three-exchange}(ii) imply that $I\setminus\{v_i:i\in J\}$ dominates $Q$.
Choose a minimal subset $T\subseteq I\setminus\{v_i:i\in J\}$ with $Q\subseteq N_H(T)$, and let $t=|T|$.
No single vertex outside the maximum clique $Q$ dominates $Q$, so $2\le t\le q-3$. For each $z\in T$, define $d_Q(z):=|N_H(z)\cap Q|$. Since $T$ dominates $Q$, $\sum_{z\in T}d_Q(z)\ge a$. Choose $v\in T$ minimizing $d_Q(v)$; then
$\sum_{z\in T\setminus\{v\}}d_Q(z)\ge(t-1)a/t$. Minimality of $T$ provides $w\in Q$ adjacent to $v$ and to no other
vertex of $T$. Thus $T'=(T\setminus\{v\})\cup\{w\}$ is independent in $H$.

Let $U=V(H)\setminus N_H[T']$, $u=|U|$, and $s=q-t\ge3$. Every independent set in $H[U]$ extends by $T'$, so $\alpha(H[U])\le s$. Since $Q\subseteq N_H[w]$, $N_H[T']\subseteq N_H[w]\cup\bigcup_{z\in T\setminus\{v\}}(N_H[z]\setminus Q)$.
Consequently,
\begin{equation}\label{eq:U-size}
 u\ge n-tD+\frac{t-1}{t}a,\qquad \alpha(H[U])\le s.
\end{equation}
Using $D<n/(q-1)$ and $a<n/q$ in \eqref{eq:general-counterexample}, we obtain
\begin{equation}\label{eq:u-sa}
 u-sa\ge n-tD-\left(s-1+\frac1t\right)a
 >\frac{n\bigl[(t-2)(q-3-t)+q-5\bigr]}{tq(q-1)}>0.
\end{equation}
Dividing \eqref{eq:U-size} by $D$ and using
$n/D>q-1$ from \eqref{eq:general-counterexample} and $a/D>1-2/q$ from \eqref{eq:general-deficits} also implies
\begin{equation}\label{eq:U-density}
 \frac uD>s-\frac1t-\frac{2(t-1)}{tq}.
\end{equation}

Suppose first that $s\ge4$, so $2\le t\le q-4$. Substitute $a\ge2n/q-D$ into \eqref{eq:U-size}, and then use $D<n/(q-1)$, to obtain
\begin{equation}\label{eq:U-proportion}
 u-\frac{s}{q}n\geq n-tD+\frac{t-1}{t}\left(\frac{2n}{q}-D\right)-\frac{s}{q}n
 >\frac{n\bigl[(t-2)(q-4-t)+q-6\bigr]}{tq(q-1)}\ge0.
\end{equation}
If $s=4$, Theorem~\ref{thm:alpha4} and \eqref{eq:scaling} give $M_{q-1}(H[U])\ge(q/5)M_4(H[U])\ge qu/4$. If $s\ge5$, the induction hypothesis, including the established case $s=5$, gives $M_{s-1}(H[U])\ge u$; then \eqref{eq:scaling} gives
$M_{q-1}(H[U])\ge qu/s$. In either case \eqref{eq:U-proportion} implies $M_{q-1}(H)\ge M_{q-1}(H[U])>n$, a contradiction.

It remains to consider $s=3$, so $t=q-3$. If $q\ge7$, \eqref{eq:U-density} gives $u/D>5/2$, since $
 3-\frac1{q-3}-\frac{2(q-4)}{q(q-3)}-\frac52
 =\frac{(q-7)(q-2)+2}{2q(q-3)}>0.$ Since $\Delta(H[U])+1\le D$, we have $u/(\Delta(H[U])+1)\geq u/D>5/2$, and thus  Lemma~\ref{lem:alpha3}(ii) applies to $H[U]$ and gives $\omega(H[U])\ge u/3>a$ by \eqref{eq:u-sa}. This contradicts $\omega(H)=a$.

For $q=6$ and $t=3$, we obtain $u>2D+2a/3$ and $u>3a$ by  \eqref{eq:general-counterexample}, \eqref{eq:U-size} and \eqref{eq:u-sa}. Since $\rho>2/3$, we have $u>22D/9>7D/3$. Since $\alpha(H[U])\leq s=3$, extend $\ov{H[U]}$ to a maximal $K_4$-free graph $G$. Then $\delta(G)\ge u-D>4u/7$ and $\alpha(G)\le\alpha(\ov{H[U]})\le a$. The non-join alternative of Lemma~\ref{lem:Lyle}(i) gives $\alpha(G)\ge 4\delta(G)-2u\geq 2u-4D>4a/3>a$, a contradiction.
In the join alternative of Lemma~\ref{lem:Lyle}(i), $G=G_1\vee G_2$ such that $G_1$ has no edges and $G_2$ is triangle-free.  Thus $|G_1|\le a$. Let $r=|G_2|$. Since $u>2D+2a/3$ and $a<D$, we have $r\ge u-a>2D-a/3>5D/3$. Also $r\ge u-a>2a$ because $u>3a$ in \eqref{eq:u-sa}.
Since $\ov{G_2}$ is a subgraph of $H$, $r-\delta(G_2)\le D$. Therefore $\delta(G_2)\ge r-D>2r/5$. Lemma~\ref{lem:triangle-degree}(ii) implies that $G_2$ is bipartite, so $\alpha(G)\ge r/2>a$, again a contradiction.

We have proved that every maximum clique in $H$ meets every maximum independent set in $H$. Choose a maximum clique $Q$ in $H$. Every maximum independent set in $H$ meets $Q$ in exactly one vertex; thus $\alpha(H-Q)=q-1$. Also $|H-Q|=n-a>(q-1)n/q$ since $a<n/q$ in \eqref{eq:general-counterexample}.
The induction hypothesis, including the case $q-1=5$, gives $M_{q-2}(H-Q)\ge|H-Q|$.
Finally, \eqref{eq:scaling} gives $
 M_{q-1}(H)\ge M_{q-1}(H-Q)
 \ge\frac q{q-1}M_{q-2}(H-Q)
 \ge\frac q{q-1}|H-Q|>n,$ contrary to \eqref{eq:general-counterexample}.
\end{proof}

\begin{proof}[Proof of Theorem~\ref{thm:main}]
If $k\ge6$ and $G$ is a nonempty $K_k$-free graph, then $\alpha(\ov G)\le k-1$. Theorem~\ref{thm:weighted-general}, with $q=k-1$, gives
$M_{k-2}(\ov G)\ge|G|$. Lemma~\ref{lem:binding-M} therefore implies $\bind(G)\le k-2$.

For sharpness, let $G$ be a complete $(k-1)$-partite graph whose parts all have cardinality $m\ge1$. This graph is $K_k$-free. A nonempty set meeting two distinct parts has neighborhood $V(G)$ and is not admissible in \eqref{eq:binding-definition}. Every admissible set lies in one part and has exactly $(k-2)m$ neighbors, so the neighborhood ratio is at least $k-2$, with equality for a whole part.  Hence the upper bound $k-2$ is sharp.
\end{proof}


\section*{AI Declaration}

ChatGPT was utilized to assist the authors in exploring proof strategies and refining the presentation of this manuscript. All mathematical arguments and proofs in the final manuscript were constructed and verified by the authors, who take full responsibility for all results.


\begin{thebibliography}{99}
	
	\bibitem{Anderson}
	I.~Anderson, \emph{Perfect matchings of a graph}, J. Combin. Theory Ser. B \textbf{10}(3) (1971), 183--186. \href{https://doi.org/10.1016/0095-8956(71)90041-4}{doi:10.1016/0095-8956(71)90041-4}.
	
	\bibitem{AES}
	B.~Andr\'asfai, P.~Erd\H{o}s, and V.~T.~S\'os, \emph{On the connection between chromatic number, maximal clique and minimal degree of a graph}, Discrete Math. \textbf{8}(3) (1974), 205--218. \href{https://doi.org/10.1016/0012-365X(74)90133-2}{doi:10.1016/0012-365X(74)90133-2}.
	
	\bibitem{Brandt}
	S.~Brandt, \emph{Dense graphs with bounded clique number}, Habilitation thesis, Freie Universit\"at, Berlin, 2001.
	
	\bibitem{DGLS}
	S.~Durocher, D.~S.~Gunderson, P.~C.~Li, and M.~Skala, \emph{Cycle-maximal triangle-free graphs}, Discrete Math. \textbf{338} (2015), 274--290. \href{https://doi.org/10.1016/j.disc.2014.10.002}{doi:10.1016/j.disc.2014.10.002}.
	
	\bibitem{GoddardSurvey}
	W.~Goddard, \emph{Binding Number, Cycles, and Cliques}, in R.~Gera, T.~W.~Haynes, and S.~T.~Hedetniemi (eds.), Graph Theory: Favorite Conjectures and Open Problems---2, Problem Books in Mathematics, Springer, Cham, 2018, pp.~19--25. \href{https://doi.org/10.1007/978-3-319-97686-0_3}{doi:10.1007/978-3-319-97686-0\_3}.
	
	\bibitem{Jin}
	G.~Jin, \emph{Triangle-free graphs with high minimal degrees}, Combin. Probab. Comput. \textbf{2} (1993), 479--490. \href{https://doi.org/10.1017/S0963548300000845}{doi:10.1017/S0963548300000845}.
	
	\bibitem{KM}
	V.~G.~Kane and S.~P.~Mohanty, \emph{Binding number, cycles and complete graphs}, in S.~B.~Rao (ed.), Combinatorics and Graph Theory, Lecture Notes in Mathematics \textbf{885}, Springer, Berlin--Heidelberg, 1981, pp.~290--296. \href{https://doi.org/10.1007/BFb0092273}{doi:10.1007/BFb0092273}.
	
	\bibitem{Lyle}
	J.~Lyle, \emph{A note on independent sets in graphs with large minimum degree and small cliques}, Electron. J. Combin. \textbf{21}(2) (2014), Paper P2.38. \href{https://doi.org/10.37236/3881}{doi:10.37236/3881}.
	
	\bibitem{LG}
	J.~Lyle and W.~Goddard, \emph{The binding number of a graph and its cliques}, Discrete Appl. Math. \textbf{157}(15) (2009), 3336--3340. \href{https://doi.org/10.1016/j.dam.2009.06.014}{doi:10.1016/j.dam.2009.06.014}.
	
	\bibitem{RW}
	A.~M.~Robertshaw and D.~R.~Woodall, \emph{Triangles and neighbourhoods of independent sets in graphs}, J. Combin. Theory Ser. B \textbf{80} (2000), 122--129. \href{https://doi.org/10.1006/jctb.2000.1974}{doi:10.1006/jctb.2000.1974}.
	
	\bibitem{ShiTriangle}
	R.~Shi, \emph{The binding number of a graph and its triangle}, Acta Math. Appl. Sin. Engl. Ser. \textbf{2}(1) (1985), 79--86. \href{https://doi.org/10.1007/BF01666521}{doi:10.1007/BF01666521}.
	
	\bibitem{ShiPancyclic}
	R.~Shi, \emph{The binding number of a graph and its pancyclism}, Acta Math. Appl. Sin. Engl. Ser. \textbf{3}(3) (1987), 257--269. \href{https://doi.org/10.1007/BF02007670}{doi:10.1007/BF02007670}.
	
	\bibitem{Woodall}
	D.~R.~Woodall, \emph{The binding number of a graph and its Anderson number}, J. Combin. Theory Ser. B \textbf{15}(3) (1973), 225--255. \href{https://doi.org/10.1016/0095-8956(73)90038-5}{doi:10.1016/0095-8956(73)90038-5}.
	
\end{thebibliography}
\end{document}